\newtheoremstyle{example}{\topsep}{\topsep}%
     {}
     {}
     {\rmfamily}
     {}
     {\newline}
     {\thmname{#1}\thmnumber{ #2}\thmnote{ #3}}
   \theoremstyle{example}
\def\min{\text{min}}
\newcommand{\indic}{\mathbb{I}}
\numberwithin{equation}{section}
\theoremstyle{plain}
\newtheorem{thm}{Theorem}[section]
\newtheorem{defin}{Definition}[section]
\newtheorem{prop}{Proposition}[section]
\newtheorem{rem}{Remark}[section]
\newcommand{\Lower}[2]{\smash{\lower #1 \hbox{#2}}}
\newcommand{\ben}{\begin{enumerate}}
\newcommand{\een}{\end{enumerate}}
\newcommand{\bi}{\begin{itemize}}
\newcommand{\ei}{\end{itemize}}
\begin{document}

\begin{frontmatter}
\title{Poisson Latent Feature Calculus for Generalized Indian Buffet Processes\protect} \runtitle{IBP}

\begin{aug}
\author{\fnms{Lancelot F.} \snm{James}\thanksref{t1}\ead[label=e1]{lancelot@ust.hk}}

\thankstext{t1}{Supported in
part by the grant RGC-HKUST 601712 of the HKSAR.}

\runauthor{Lancelot F. James}

\affiliation{Hong Kong University of Science and Technology}

\address[a]{Lancelot F. James\\ The Hong Kong University of Science and
Technology, \\Department of Information Systems, Business Statistics and Operations Management,\\
Clear Water Bay, Kowloon, Hong Kong.\\ \printead{e1}.}

\contributor{James, Lancelot F.}{Hong Kong University of Science
and Technology}

\end{aug}

\begin{abstract}
The purpose of this work is to describe a unified, and indeed simple,  mechanism for  non-parametric Bayesian analysis, construction and generative sampling of a large class of 
latent feature models which one can describe as generalized notions of Indian Buffet Processes~(IBP). This is done via the Poisson Process Calculus as it now relates to latent feature models. The IBP, first arising in a Bayesian Machine Learning context, was ingeniously devised by Griffiths and Ghahramani in (2005) and its generative scheme is cast in terms of customers entering sequentially an Indian Buffet restaurant and selecting previously sampled dishes as well as new dishes. In this metaphor dishes corresponds to latent features, attributes, preferences shared by individuals. The IBP, and its generalizations, represent an exciting class of models well suited to handle high dimensional statistical problems now common in this information age. In a survey article Griffiths and Ghahramani note applications for Choice models, modeling protein interactions, independent components analysis and sparse factor analysis, among others. 
 The IBP is based on the usage of conditionally independent Bernoulli random variables, coupled with completely random measures acting as Bayesian priors, that are used to create sparse binary matrices.  This Bayesian non-parametric view was a key insight due to Thibaux and Jordan (2007). One way to think of generalizations is to to use more general random variables. Of note in the current literature are models employing Poisson and Negative-Binomial random variables. 
However, unlike their closely related counterparts, generalized Chinese restaurant processes, the ability to analyze IBP models in a systematic and general manner is not yet available. The limitations are both in terms of knowledge about the effects of different priors and in terms of models based on a wider choice of random variables.  This work will not only provide a thorough description of the properties of existing models but also provide  a simple template to devise and analyze new models. We close by proposing and analyzing general classes of multivariate processes.
\end{abstract}

\begin{keyword}[class=AMS]
\kwd[Primary ]{60C05, 60G09} \kwd[; secondary ]{60G57,60E99}
\end{keyword}

\begin{keyword}
\kwd{ Bayesian Statistics, Indian Buffet Process, Latent Feature models, Poisson Process, Statistical Machine Learning}
\end{keyword}

\end{frontmatter}
\section{Introduction}
The purpose of this work is to describe a unified, and indeed simple,  mechanism for non-parametric Bayesian analysis of a large class of generative latent feature models which one can describe as generalized notions of Indian Buffet Processes(IBP). This work will not only provide a thorough description of the properties of existing models but also provide  a simple template to devise and analyze new models. That is to say, although one of the goals in this article is to promote a particular calculus, the results are presented in a fashion where it is not necessary for the reader interested primarily in implementation and new constructions  to have a command of that calculus.  

The IBP, and its generalizations, represent an exciting class of models well suited to handle high dimensional problems now common in this information age. These points are articulated in for instance \cite{Broderick1,Broderick3,Zoubin, Gorur,Griffiths1,GriffithsZ,Kang, KnowlesThesis, Miller, Palla, TehG, Thibaux, Williamson, Wood, Xu}, and for placement in a wider context see~\cite{orbanz1}. 
The IBP is based on the usage of conditionally independent Bernoulli random variables, one way to think of generalizations is to to use more general random variables. Of note in the current literature are models employing Poisson and Negative-Binomial random variables as described in for instance
\cite{Broderick2,Croy,Titsias,Zhou1,Zhou2}. These are generally coupled with gamma and beta process priors. 
Even in those specialized cases the properties of these models are not well understood, and the analysis conducted so far requires great care.  Additionally, for instance, it is unclear how replacing a gamma process with a generalized gamma process works in the model of \cite{Titsias}. Indeed a generalized gamma process is not conjugate in the Poisson model but it does have extra flexibility over its gamma process counterpart. This article describes general results  for IBP type processes based on variables having any distribution $G_{A},$  that has mass at zero and otherwise can be discrete or continuous. Examples of models employing continuous $A$ are given in~\cite{KnowlesThesis, KnowlesAAS}. Furthermore, the results are presented for general prior models based on completely random measures. That is to say the analysis does not require conjugacy.

The method proposed is  via the \emph{Poisson Partition Calculus} (PPC) devised by the author in~\cite{James2002,James2005}, and applied further for instance in~\cite{JamesNTR,JLP2}. 
The \textit{partition} calculus refers to a non-combinatorial approach to systematically derive expressions for combinatorial mechanisms derived from Bayesian exchangeable processes whereby $W_{1},\ldots, W_{n}|\mu$ are conditionally iid  and where $\mu$ is a random probability measure or more generally a random measure, that can be expressed as a functional of a Poisson random measure $N.$ One is interested in the posterior distribution of $\mu|W_{1},\ldots W
_{n},$ and the marginal distribution of $(W_{1},\ldots,W_{n}).$ 
Due to discreteness, the marginal distribution of $(W_{1},\ldots,W_{n}).$ 
can be viewed as an analogue of a Blackwell-MacQueen Polya urn scheme, which is the generative sampling scheme corresponding to the marginal joint probability measure induced by setting $\mu:=P$ to be a  Dirichlet process random probability measure, and modeling  $W_{1},\ldots, W_{n}|P$ to be iid $P.$  Furthermore, 
these structures describe random partitions of ${\{1,\ldots,n\}}$ by the number of unique values among $(W_{1},\ldots,W_{n})$ , which can be viewed as clusters, and the size of clusters based on the tied values. Loosely speaking these can be described as analogues of Chinese restaurant processes(CRP), most closely associated with the Dirichlet and Pitman-Yor processes which are used as priors/models in Bayesian non-parametric statistics and employed extensively in Statistical Machine Learning. See for instance \cite{IJ2003,Pit02,Pit06}. We shall provide a schematic for this method as it relates to latent feature models, but first describe more details for the IBP and its generalizations.   
\subsection{The Indian Buffet Process(IBP)}
The basic $\mathrm{IBP}(\theta)$ process, for $\theta>0,$
was ingeniously formulated by Griffiths and Ghahramani~\cite{GriffithsZ, Griffiths1} whereby a random binary matrix is formed with  $(\tilde{\omega}
_{k},k=1,\ldots)$ distinct 
features or attributes labeling an unbounded number of columns, and M rows, where each row $i$ represents the attributes/features/preferences possessed by a single individual by entering $1$ in the $(i,k)$ entry if the feature $\tilde{\omega}_{k}$ is possessed and $0$ otherwise. The matrix naturally indicates what features are shared among individuals. The  generative process to describe this is cast in terms of individual customers sequentially entering an Indian Buffet restaurant whereby the $i$-th customer chooses one of the $K$ already sampled dishes(indicates that customer shares features already exhibited by the previous customers) according to the most popular dishes already sampled, specifically with probability $m
_{j,i}/i,$ where $m_{j,i}$ denotes the number of customers who have already sampled dish $j$. Otherwise the customer chooses new dishes according to a $\mathrm{Poisson}(\theta/i)$ distribution. A key insight was made by Thibaux and Jordan~\cite{Thibaux}, which connected this generative process with a formal Bayesian non-parametric framework where $Z_{1},\ldots,Z_{M}|\mu$ are modelled as iid Bernoulli processes with base measure $\mu$ that is selected to have a Beta process prior distribution whose iid atoms $(\tilde{\omega}_{k})$ are obtained by specifying a proper probability $B_{0}$ as its base measure. The generative process is given by the distribution of $Z_{M+1}|Z_{1},\ldots, Z_{M}.$ In other words the basic IBP$(\theta)$ is generated from a random matrix with entries ${(b_{i,k})}$ where conditioned on the $(p_{j}),$ the points of Poisson random measure with mean intensity $\rho(s)=\theta s^{-1}\indic_{\{0<s<1\}},$ $b_{i,k}$ are independent Bernoulli $(p_{k})$ variables. It follows that one can represent each $Z_{i}=\sum_{k=1}^{\infty}b_{i,k}\delta_{\tilde{\omega}_{k}}$ and the Beta process $\mu=\sum_{k=1}^{\infty}p_{k}\delta_{\tilde{\omega}_{k}}.$

\begin{rem}
The concept of Beta processes was developed in the fundamental paper of Hjort~\cite{hjort} as it relates to Bayesian NP problems arising in survival analysis, Kim~\cite{KimY} is also an important reference in this regard. Naturally within this context is the related work of Doksum~\cite{Doksum}. However it is used in a rather different context in the IBP setting. Other uses of a beta process within a L\'evy moving process context can be found in James~\cite{James2005}
which apparently is a precursor to the kernel smoothed Beta process in \cite{Ren}. Spatial notions of Beta and other processes appear in~\cite{James2005, JamesNTR}.
\end{rem}
\subsection{Generalizations}
Using the theory of marked Poisson point processes we can easily describe the generalization of the IBP that includes the models already mentioned above. Simply replace the $((b_{i,k},p_{k}))$ with more general variables $((A_{i,k},\tau_{k}))$ where conditional on $(\tau_{k})$ $A_{i,k}$ are independent random variables(possibly vector valued) with distribution denoted as $G_{A}(da|\tau_{k}),$ where $\tau_{k}$ are the points of a Poisson random measure with more general L\'evy density $\rho(s|\omega),$ not restricted to $[0,1]$ and possibly depending on $\omega,$
 which reflects dependence on the distribution of the $(\tilde{\omega}_{k}),$ $B_{0},$ on a space $\Omega.$
Importantly, for each $i,$ 
$((A_{i,k},\tau_{k},(\tilde{\omega}_{k}))$
 are the points of a Poisson random measure with mean intensity
\begin{equation}
\label{markedmeasure}
G_{A}(da|s)\rho(s|\omega)dsB_{0}(d\omega).
\end{equation}
Furthermore we shall assume that $A_{i,k}$ can take on the value zero with positive probability. So relative to $G_{A}(\cdot|s),$ write $\pi_{A}(s)=\mathbb{P}(A\neq 0|s),$ and hence $1-\pi_{A}(s)=\mathbb{P}(A=0|s)$

The general construction is where now
\begin{equation}
Z_{i}=\sum_{k=1}^{\infty}A_{i,k}\delta_{{\tilde{\omega}}_{k}}{\mbox {and }}\mu=\sum_{k=1}^{\infty}\tau_{k}\delta_{\tilde{\omega}_{k}}.
\label{genpair}
\end{equation}
\begin{rem}
Notice that one can always represent $A_{i,k}=b_{i,k}A'_{i,k},$ where $b_{i,k}$ are Bernoulli variables and $A'_{i.k}$ is a general random variable that is not necessarily independent of $b_{i,k}.$ So that $Z_{i}$ in (\ref{genpair}) can be viewed as a weighted Bernoulli process. The dependent representation makes these more general than models of the form considered in \cite{KnowlesThesis, KnowlesAAS}.
\end{rem}
Here key to our exposition, it is important to note that $\mu$ can always be represented as 
\begin{equation}
\label{Nrep}
\mu(d\omega)=\int_{0}^{\infty}sN(ds,d\omega),
\end{equation}
where $N=\sum_{k=1}^{\infty}\delta_{\tau_{k},\tilde{\omega}_{k}}$ is a Poisson random measure with mean intensity 
$$
\mathbb{E}[N(ds,d\omega)]=\rho(s|\omega)dsB_{0}(d\omega):=\nu(ds,d\omega).
$$
As in James\cite{James2005}, $N$ takes its values in the space of boundedly finite measures, $\mathcal{M}.$
\begin{rem}
It is further noted that $\nu$ can be defined over any Polish space. This may be relevant in applications where $G_{A}$ is specified in terms of more general parameters, such as \cite{KnowlesAAS}, say $G_{A}(\cdot|s,\lambda).$  For instance $A|A\neq 0$ could correspond to a Normal distribution with mean and variance parameters $(\eta,\sigma).$ Such extensions from a technical point of view are easily handled and will not be discussed explicitly here in the univariate case. However the multivariate extension in section 5 covers this case.
\end{rem}
\begin{defin}
We say that $N$ is $\mathrm{PRM}(\rho B_{0}),$ or more generally $\mathrm{PRM}(\nu)$ and also write $\mathcal{P}(dN|\nu),$ when discussing calculations. $\mu$ is by construction a completely random measure and we shall specify its law by saying $\mu$ is 
$\mathrm{CRM}(\rho B_{0}),$ or $\mathrm{CRM}(\nu).$  Using this we shall say that $Z_{1},\ldots,Z_{M}|\mu$ are iid $\mathrm{IBP}(G_{A}|\mu),$ if they have the specifications in~(\ref{genpair}) and call the marginal distribution of $Z_{i}$ 
$\mathrm{IBP}(A,\rho B_{0})$ or equivalently $\mathrm{IBP}(A,\nu).$
\end{defin}
\begin{rem} Formally we choose $\rho$ to satisfy $\int_{0}^{\infty}\min(s,1)\rho(s|\omega)ds<\infty.$ However we allow both infinite activity models corresponding to $\int_{0}^{\infty}\rho(s|\omega)ds=\infty$ and finite activity/compound models where 
$\int_{0}^{\infty}\rho(s|\omega)ds=c(\omega)<\infty.$ In the latter case $\rho(s|\omega)/c(\omega)$ is a proper density. 
\end{rem}

\subsection{Outline}
Notice that in~(\ref{genpair}) the only difference in the setup is the choice of the distribution on $A_{i,k}.$ Obviously different choices of $G_{A}$ lead to different modeling capabilities and interpretations. We note further that the PPC is a method that does not rely on conjugacy. That is to say if it can be applied for one choice of $\rho,$ it can be applied for all choices of $\rho.$  In the forthcoming section we shall provide a schematic to derive the posterior distribution and related quantities for any $G_{A},$ and any $\rho.$ We will then state formally the posterior distribution for $N$ and $\mu$, marginal distribution of $Z_{i},$ and the distributions of $Z_{M+1}|Z_{1},\ldots, Z_{M}$
We shall also introduce classes of iid variables $(H_{i},X_{i})$ which can provide a representation for the marginal distribution of $Z
_{1},$ hence all $Z_{i}$ that can be implemented for most choices of $\rho.$ This is one of the key elements to describe the appropriate analogues of the Indian Buffet generative process that can be implemented broadly. We note that in order for this to apply, $G_{A}$ must at minimum admit a positive  mass at $0,$ or $\rho$ must be taken to be a finite measure.   Overall we shall show how to do the following for any $\rho,$ without taking limits, without guesswork and without combinatorial arguments, and lastly without long proofs. We shall then show details for all choices of $A_{i,k},$ Bernoulli, Poisson, Negative-Binomial,  that have been mentioned above.

\begin{itemize}
\item[$\bullet$]Generally apply the Poisson Calculus.
\item[$\bullet$]Describe $N|Z_{1},\ldots, Z_{M}$
\item[$\bullet$]Hence $\mu|Z_{1},\ldots,Z_{M}$
\item[$\bullet$]Describe the marginal structure $\mathbb{P}(Z_{1},\ldots,Z
_{M})$ via integration.
\item[$\bullet$]Provide descriptions for the marginal process as
$$
Z=\sum_{k=1}^{\xi(\varphi)}X_{k}\delta_{\tilde{\omega}_{k}},
$$
that via the introduction of variables $(H_{i},X_{i})$ leads to tractable sampling schemes for many $\rho$
\item[$\bullet$] Describe $Z_{M+1}|Z_{1},\ldots,Z_{M}$ 
\item[$\bullet$] These last two points lead to rather explicit descriptions of the marginal process that apply for many $\rho$ and hence lead to generative schemes that generalize the IBP feature selection scheme. 
\end{itemize}
In section 5 we will describe and provide parallel  details for a large class of multivariate models. Much of what has been described in the univariate case carries over quite clearly to the multivariate case. We shall also highlight the use of a beta-Dirichlet process described in~\cite{KimY2} as a natural extension of a Beta process in this setting.
\section{Poisson Latent Feature Calculus Schematic}One should first note that Poisson random measures are the building blocks for most discrete random processes used in the Bayesian Nonparametrics literature. Certainly this is true for any case where one employs completely random measures. 
\subsection{Basic updating operations using the PPC}
For our purposes here we will need two ingredients of the PPC that can be found as Propositions 2.1 and 2.2 in James~\citep[p. 6-8]{James2005}[see also \citep[p. 7-8]{James2002}]. These represent direct extensions of updating mechanisms for the Dirichlet and gamma processes employed in \cite{Lo1982, Lo1984, LoWeng89} and also~\cite{IJ2004,James2003}. 
Note again that 
\begin{equation}
N=\sum_{k=1}^{\infty}\delta_{\tau_{k},\tilde{\omega}_{k}}{\mbox { hence }}\mu(d\omega)=\int_{0}^{\infty}sN(ds,d\omega).
\label{relation}
\end{equation}
\subsubsection{The Laplace functional exponential change of measure}
First \cite[Proposition 2.1]{James2005} describes a Laplace functional exponential change of measure which describes updating $N$ through changing the mean measure $\nu,$ to $\nu_{f},$
\begin{equation}
\label{prop1}
{\mbox e}^{-N(f)}\mathcal{P}(dN|\nu)=\mathcal{P}(dN|\nu_{f}){\mbox e}^{-\Psi(f)},
\end{equation}
where in our setting $\nu_{f}(ds,d\omega)={\mbox e}^{-f(s,\omega)}\rho(s|\omega)dsB_{0}(d\omega)$ and ${\mbox e}^{-\Psi(f)}=\mathbb{E}_{\nu}[{\mbox e}^{-N(f)}],$ where
$$
\Psi(f)=\int_{\Omega}\int_{0}^{\infty}(1-e^{-f(s,\omega)})\rho(s|\omega)dsB_{0}(d\omega).
$$
This action changes $N$ from $\mathrm{PRM}(\nu)$ to $\mathrm{PRM}(\nu_{f}).$ 
\subsubsection{Disintegration involving the moment measure: decomposing $N$}
Next using $\nu_
{f}$ apply the moment measure calculation appearing in \cite[Proposition 2.2.]{James2005}
\begin{equation}
\label{prop2}
\left[\prod_{i=1}^{L}N(dW_{i})\right]\mathcal{P}(dN|\nu_{f})=\mathcal{P}(dN|\nu_{f},\mathbf{s,\omega})\prod_{\ell=1}^{K}{\mbox e}^{-f(s_{\ell},\omega_{\ell})}\nu(ds_{\ell},d\omega_{\ell})
\end{equation}
expressed over $K\leq L$ uniquely picked points $(s_{\ell},\omega_{\ell}),$ from $N.$ In other words conditioned on $N,$ the distribution of these points is determined by the joint measure 
$$
\prod_{\ell=1}^{K}N(ds_{\ell},d\omega_{\ell}).
$$

There are two important things to notice about~(\ref{prop2}). First is the meaning of $\mathcal{P}(dN|\nu_{f},\mathbf{s,\omega}),$ which corresponds to the law of the random measure
\begin{equation}
\label{Nsplit}
\tilde{N}+\sum_{\ell=1}^{K}\delta_{s_{\ell},\omega_{\ell}}
\end{equation}
where $\tilde{N}$ is also $\mathrm{PRM}(\nu_{f}).$ The quantity above is merely a decomposition of $N$ in terms of the unique points picked and those remaining . It says remarkably, and in fact is well known, that $\tilde{N}$
maintains the same law as $N.$ This translates into the following result, for any measurable function 
$g(\mathbf{W},N),$
$$
\int_{\mathcal{M}} g(\mathbf{W},N)\mathcal{P}(dN|\nu_{f},\mathbf{s,\omega})=
\int_{\mathcal{M}} g(\mathbf{W},N+\sum_{\ell=1}^{K}\delta_{s_{\ell},\omega_{\ell}})
\mathcal{P}(dN|\nu_{f}).
$$
Furthermore the marginal measure 
$$\prod_{\ell=1}^{K}\nu_{f}(ds_{\ell},d\omega_{\ell})=
\mathbb{E}_{\nu_{f}}[\prod_{i=1}^{L}N(dW_{i})]=\mathbb{E}_{\nu_{f}}[\prod_{\ell=1}^{K}N(ds_{\ell},d\omega_{\ell})],
$$ 
only depends on the $K$ unique values and not the multiplicities that might be associated with them. For a proof of this see~James \citep[Proposition 5.2]{James2005}.

\subsection{The joint distribution}
The form of the relevant likelihood can be deduced from the discussion above. However, it is perhaps instructive to first write it similar to the form used in \cite[Appendix A.1]{Zhou2}
$$
\prod_{i=1}^{M}\prod_{j=1}^{\infty}{[G_{A}(da_{i,j}|\tau_{j})]}^{\indic_{\{a_{i,j}\neq 0\}}}
{[1-\mathbb{P}(A\neq 0|\tau_{j})]}^{1-\indic_{\{a_{i,j}\neq 0\}}}
$$
which, setting $\pi_{A}(s)=\mathbb{P}(A\neq 0|s)$ can be written as 
$$
{\mbox e}^{-\sum_{j=1}^{\infty}[-M\log(1-\pi_{A}(\tau_{j}))]}\prod_{i=1}^{M}\prod_{j=1}^{\infty}{\left[\frac{G_{A}(da_{i,j}|\tau_{j})}{
1-\pi_{A}(\tau_{j})}\right]}^{\indic_{\{a_{i,j}\neq 0\}}}.
$$ 
It then follows by, arguments similar to \cite[Appendix A.1]{Zhou2}, and augmenting $\mu,$ that one can write the joint distribution of  $((Z_{1},\ldots, Z_{M}), (J_{1},\ldots,J_{K}),N),$ where $(J_{1}=s_{1},\ldots, J_{K}=s_{K})$ are the unique jumps, as,
\begin{equation}
\label{Alikelihood}
{\mbox e}^{-N(f_{M})}\mathcal{P}(dN|\nu)\prod_{\ell=1}^{K}N(ds_{\ell},d\omega_{\ell})\prod_{i=1}^{M}{\left[\frac{G_{A}(da_{i,\ell}|s_{\ell})}{
1-\pi_{A}(s_{\ell})}\right]}^{\indic_{\{a_{i,\ell}\neq 0\}}}.
\end{equation}

where $f_{M}(s,\omega)=-M\log(1-\pi_{A}(s)].$ 

Now apply the exponential change of measure in (\ref{prop1}) to obtain 
$$
\mathcal{P}(dN|\nu_{f_{M}}){\mbox e}^{-\Psi(f_{M})}\prod_{\ell=1}^{K}N(ds_{\ell},d\omega_{\ell})\prod_{i=1}^{M}{\left[\frac{G_{A}(da_{i,\ell}|s_{\ell})}{
1-\pi_{A}(s_{\ell})}\right]}^{\indic_{\{a_{i,\ell}\neq 0\}}}.
$$
Noticing that,
$$
{\mbox e}^{-f_{M}(s_{\ell},\omega_{\ell})}={[1-\pi_{A}(s_{\ell})]}^{M},
$$ 
this operation results in the law of $N$ changing from $\mathrm{PRM}(\nu)$ to $\mathrm{PRM}(\nu_{f_{M}}),$ that is  $\nu(ds,d\omega)$ is changed to 
$$\nu_{f_{M}}(ds,d\omega)={\mbox e}^{-f_{M}(s,\omega)}\nu(ds,d\omega)=
{[1-\pi_{A}(s)]}^{M}\rho(s|\omega)B_{0}(d\omega)ds.
$$

Now apply the disintegration in~(\ref{prop2}) to obtain the desired final form of the joint distribution
\begin{equation}
\label{mainjoint}
\mathcal{P}(dN|\nu_{f_{M}},\mathbf{s,\omega}){\mbox e}^{-\Psi(f_{M})}
\prod_{\ell=1}^{K}{\mbox e}^{-f_{M}(s_{\ell},\omega_{\ell})}\rho(s_{\ell}|\omega_{\ell})B_{0}(d\omega_{\ell}))
\prod_{i=1}^{M}h_{i,\ell}(s_{\ell})
\end{equation}
or
\begin{equation} 
\label{mainjoint3}
\mathcal{P}(dN|\nu_{f_{M}},\mathbf{s,\omega}){\mbox e}^{-\Psi(f_{M})}
\prod_{\ell=1}^{K}
{[1-\pi_{A}(s_{\ell})]}^{M}
\rho(s_{\ell}|\omega_{\ell})B_{0}(d\omega_{\ell}))
\prod_{i=1}^{M}h_{i,\ell}(s_{\ell})
\end{equation}
where
\begin{equation}
\label{hfunction}
h_{i,\ell}(s)={\left[\frac{G_{A}(da_{i,\ell}|s)}{
1-\pi_{A}(s)}\right]}^{\indic_{\{a_{i,\ell}\neq 0\}}},
\end{equation}
and
\begin{equation}
\Psi(f_{M})=\int_{\Omega}\int_{0}^{\infty}(1-{[1-\pi_{A}(s)]}^{M})\rho(s|\omega)dsB_{0}(d\omega).
\label{psifunctionM}
\end{equation}

\begin{rem}
Note in order to emphasize a notion of partial conjugacy, for any L\'evy density $\rho$ and $\beta\ge 0$ we could without loss of generality define a Levy density
$$
\tilde{\rho}_{\pi_{A},\beta}(s|\omega)={[1-\pi_{A}(s)]}^{\beta}\rho(s|\omega)
$$
which we would assign to the prior measure $\mu$
Note of course that $\tilde{\rho}_{\pi_{A},0}(s|\omega)=\rho.$
Hence it follows that for $\beta\ge 0,$ and $\nu(ds,d\omega)=\tilde{\rho}_{\pi_{A},\beta}(s|\omega)B_{0}(d\omega)ds$ that 
$$
\nu_{f_{M}}(ds,d\omega)/(B_{0}(d\omega)ds)=\tilde{\rho}_{\pi_{A},\beta+M}(s|\omega):={[1-\pi_{A}(s)]}^{\beta+M}\rho(s|\omega)
$$
We will use some variant of this for illustration in the special cases of Poisson, Binomial and Negative-Binomial models. 
\end{rem}

\begin{rem}It is to be emphasized that once a quantity such as~(\ref{mainjoint}) has been calculated this contains all the ingredients for a formal posterior analysis. What remains are arguments via Bayes rule and finite-dimensional refinements involving particular choices of $G_{A},$ $\rho$ etc.

\end{rem}

\subsection{Describing posterior and marginal quantities}
Now from (\ref{Nsplit}) and (\ref{mainjoint}) one can immediately deduce that for fixed $J_{\ell}=s_{\ell},$ that $\mathcal{P}(dN|\nu_{f_{M}},\mathbf{s,\omega})$ corresponds to the case where $\tilde{N}:=N_{M}$ is $\mathrm{PRM}(\nu_{f_{M}})$
hence $\mu$ can be expressed as $\mu_{M}+\sum_{\ell=1}^{K}s_{\ell}\delta_{\omega_{\ell}},$ where $\mu_{M}$ is a $\mathrm{CRM}(\nu_{f_{M}}).$ 
What remains to complete the posterior distribution of $N|Z_{1},\ldots,Z_{M}$ is to find the distribution of $(J_{\ell})$ given $Z_{1},\ldots,Z_{M},$ Integrating out $N$ in~(\ref{mainjoint}) leads to the joint distribution of $((Z_{i}),(J_\ell)),$
\begin{equation}
\label{mainjoint2}
{\mbox e}^{-\Psi(f_{M})}
\prod_{\ell=1}^{K}{[1-\pi_{A}(s_{\ell})]}^{M}\rho(s_{\ell}|\omega_{\ell})B_{0}(d\omega_{\ell}))
\prod_{i=1}^{M}h_{i,\ell}(s_{\ell}).
\end{equation}
Note this is now in the usual finite dimensional Bayesian setup. 
Hence $(J_\ell)|(Z_{i})$ are conditionally independent with density, 
\begin{equation}
\label{marginalJ} 
\mathbb{P}_{\ell,M}(J_{\ell}\in ds)\propto {[1-\pi_{A}(s)]}^{M}\rho(s|\omega_{\ell})
\prod_{i=1}^{M}h_{i,\ell}(s)ds
\end{equation}
and the marginal of $Z_{1},\ldots,Z_{M},$ is 
\begin{equation}
\label{margZ}
\mathbb{P}(Z_{1},\ldots,Z_{M})={\mbox e}^{-\Psi(f_{M})}\prod_{\ell=1}^{K}\left[\int_{0}^{\infty}
{[1-\pi_{A}(s)]}^{M}\rho(s|\omega_{\ell})
\prod_{i=1}^{M}h_{i,\ell}(s)ds\right]B_{0}(d\omega_{\ell}).
\end{equation}
\subsection{The general posterior distribution}
We now summarize our results. Throughout we use, 
$$\nu_{f_{M}}(ds,d\omega):=\rho_{M}(s|\omega)B_{0}(d\omega)ds.
$$
where $\rho_{M}(s|\omega)={[1-\pi_{A}(s)]}^{M}\rho(s|\omega).$ Note that $\rho_{M}$ depends on $A$ and so will differ over various models. Since it should be clear from context, we shall suppress this dependence within the notation. 
\begin{thm}
\label{genthm}
Suppose that $Z_{1},\ldots,Z_{M}|\mu$ are iid $\mathrm{IBP}(G_{A}|\mu),$ $\mu$ is $\mathrm{CRM}(\rho B_{0}).$ Then
\begin{enumerate}
\item[(i)]The posterior distribution of $N|Z_{1},\ldots,Z_{M}$ is equivalent to the distribution of
$$
N_{M}+\sum
_{\ell=1}^{K}\delta_{J_{\ell},\omega_{\ell}}
$$
where $N_{M}$ is $\mathrm{PRM}(\rho_{M} B_{0})$ and the distribution of the conditionally independent $(J_{\ell})$ is given by~(\ref{marginalJ}).
\item[(ii)]The posterior distribution of $\mu|Z_{1},\ldots,Z_{M}$ is equivalent to the distribution of
\begin{equation}
\label{genMu}
\mu_{M}+\sum
_{\ell=1}^{K}J_{\ell}\delta_{\omega_{\ell}}
\end{equation}
where $\mu_{M}$ is $\mathrm{CRM}(\rho_{M} B_{0})$
\item[(iii)] The marginal distribution of $(Z_{1},\ldots,Z_{M})$ is given by~(\ref{margZ}).
\end{enumerate}
\end{thm}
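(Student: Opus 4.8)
The plan is to write down the joint law of the complete collection $((Z_1,\ldots,Z_M),(J_1,\ldots,J_K),N)$ in closed form, apply the two ingredients of the PPC recalled in~(\ref{prop1}) and~(\ref{prop2}), and then extract the three assertions by elementary finite-dimensional Bayes-rule manipulations. Once the joint law has been reduced to the form~(\ref{mainjoint}) --- a product of $\mathcal{P}(dN|\nu_{f_M},\mathbf{s,\omega})$, a deterministic constant, and a finite-dimensional density in the $K$ observed jumps --- parts (i), (iii) and then (ii) are essentially bookkeeping.

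First I would justify the likelihood~(\ref{Alikelihood}). Conditionally on $N=\sum_k\delta_{\tau_k,\tilde\omega_k}$ the rows are built from independent marks $A_{i,k}\sim G_A(\cdot|\tau_k)$, so column $k$ contributes $\prod_{i=1}^M[G_A(da_{i,k}|\tau_k)]^{\indic_{\{a_{i,k}\neq0\}}}[1-\pi_A(\tau_k)]^{1-\indic_{\{a_{i,k}\neq0\}}}$; factoring ${[1-\pi_A(\tau_k)]}^{M}$ out of every atom produces the multiplicative functional $e^{-N(f_M)}$ with $f_M(s,\omega)=-M\log(1-\pi_A(s))$, and the atoms selected by at least one $Z_i$ --- recorded by $\prod_{\ell=1}^K N(ds_\ell,d\omega_\ell)$ --- carry the residual factors $h_{i,\ell}(s_\ell)$ of~(\ref{hfunction}). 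Then I apply~(\ref{prop1}) to absorb $e^{-N(f_M)}$ into the change of measure $\nu\mapsto\nu_{f_M}$, noting $\nu_{f_M}(ds,d\omega)={[1-\pi_A(s)]}^{M}\rho(s|\omega)\,ds\,B_0(d\omega)=\rho_M(s|\omega)\,ds\,B_0(d\omega)$, at the price of the constant $e^{-\Psi(f_M)}$ with $\Psi(f_M)$ as in~(\ref{psifunctionM}); and I apply~(\ref{prop2}) to disintegrate $\prod_\ell N(ds_\ell,d\omega_\ell)\,\mathcal{P}(dN|\nu_{f_M})$ into $\mathcal{P}(dN|\nu_{f_M},\mathbf{s,\omega})$ times $\prod_\ell e^{-f_M(s_\ell,\omega_\ell)}\nu(ds_\ell,d\omega_\ell)=\prod_\ell\rho_M(s_\ell|\omega_\ell)\,B_0(d\omega_\ell)\,ds_\ell$, using the remark following~(\ref{prop2}) that this marginal depends only on the $K$ distinct points and not on their multiplicities. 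The outcome is exactly~(\ref{mainjoint}).

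From~(\ref{mainjoint}) the three claims drop out. By~(\ref{Nsplit}), $\mathcal{P}(dN|\nu_{f_M},\mathbf{s,\omega})$ is the law of $N_M+\sum_{\ell=1}^K\delta_{J_\ell,\omega_\ell}$ with $N_M\sim\mathrm{PRM}(\nu_{f_M})=\mathrm{PRM}(\rho_M B_0)$; integrating $N$ out of~(\ref{mainjoint}) leaves the genuinely finite-dimensional joint density~(\ref{mainjoint2}) of $((Z_i),(J_\ell))$, from which Bayes' rule gives conditional independence of the $J_\ell$ with normalized density~(\ref{marginalJ}) --- this is (i) --- and integrating the $J_\ell$ out of~(\ref{mainjoint2}) yields~(\ref{margZ}), which is (iii). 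Finally (ii) follows from (i) by pushing the posterior law of $N$ forward through $N\mapsto\mu(d\omega)=\int_0^\infty sN(ds,d\omega)$ as in~(\ref{Nrep}): $N_M$ maps to $\mu_M\sim\mathrm{CRM}(\rho_M B_0)$ and each $\delta_{J_\ell,\omega_\ell}$ to $J_\ell\delta_{\omega_\ell}$.

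The step requiring genuine care --- the main obstacle --- is the reduction of the a priori infinite product over all atoms of $N$ to the finite expression~(\ref{mainjoint}): one must verify that the unobserved columns contribute precisely the exponential functional $e^{-N(f_M)}$ so that~(\ref{prop1}) applies verbatim, and that the selection of the observed atoms is faithfully encoded by the moment measure $\prod_\ell N(ds_\ell,d\omega_\ell)$ together with its invariance to multiplicities, so that~(\ref{prop2}) is legitimately invoked. The remaining points --- that $\Psi(f_M)<\infty$ and that~(\ref{marginalJ}) is normalizable --- are secondary and follow from the standing assumption $\int_0^\infty\min(s,1)\rho(s|\omega)\,ds<\infty$ together with $0\le\pi_A(s)\le1$, which also ensure that the change of measure and the disintegration are not merely formal manipulations.
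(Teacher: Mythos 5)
Your proposal is correct and follows essentially the same route as the paper: derive the joint law~(\ref{Alikelihood}) by factoring ${[1-\pi_{A}(\tau_{k})]}^{M}$ out of every atom to form ${\mbox e}^{-N(f_{M})}$, apply the exponential change of measure~(\ref{prop1}) and the moment-measure disintegration~(\ref{prop2}) to reach~(\ref{mainjoint}), and then read off (i) from~(\ref{Nsplit}), (iii) by integrating out $N$ and the $J_{\ell}$, and (ii) by pushing the posterior of $N$ through~(\ref{Nrep}). Your identification of the reduction of the infinite product to~(\ref{mainjoint}) as the step requiring care, and of the multiplicity-invariance of the marginal measure, matches the paper's own emphasis.
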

This immediately leads to the next result
\begin{prop}
\label{Zproposition}
Suppose that $Z_{1},\ldots,Z_{M},Z
_{M+1}|\mu$ are iid $\mathrm{IBP}(G_{A}|\mu),$
,
where $\mu$ is $\mathrm{CRM}(\rho B_{0}).$ Then from results in Theorem~\ref{genthm}, equation (\ref{genMu}) shows that the distribution of 
$Z_{M+1}|Z_{1},\ldots,Z_{M}$ is equivalent to the distribution of 
$$
\tilde{Z}_{M+1}+\sum_{\ell=1}^{K}A_{\ell}\delta_{\omega_{\ell}}
$$
where $\tilde{Z}_{M+1}$ is  $\mathrm{IBP}(A,\rho_{M} B_{0}),$ and each $A_{\ell}|J_{\ell}=s$ has distribution $G_{A}(da|s),$ and the marginal distribution of $J_{\ell}$ is specified by  (\ref{marginalJ}). That is to say $A_{\ell}$ has the distribution $\int_{0}^{\infty}G_{A}(da|y)\mathbb{P}_{\ell,M}(dy).$ $(\omega_{\ell})$ are the already chosen features.   
\end{prop}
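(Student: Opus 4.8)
The plan is to read the result off part~(ii) of Theorem~\ref{genthm} by mixing the conditional law of $Z_{M+1}$ over the posterior of $\mu$. First I would recall that, given $Z_1,\ldots,Z_M$, equation~(\ref{genMu}) identifies the law of $\mu$ with that of $\mu_M+\sum_{\ell=1}^K J_\ell\delta_{\omega_\ell}$, where $\mu_M$ is $\mathrm{CRM}(\rho_M B_0)$, the $J_\ell$ are conditionally independent with densities $\mathbb{P}_{\ell,M}$ of~(\ref{marginalJ}), and (from the decomposition in~(\ref{Nsplit})) $\mu_M$ is independent of $(J_\ell)$. Since $Z_{M+1}\mid\mu$ is $\mathrm{IBP}(G_A\mid\mu)$ and, given $\mu$, is independent of $Z_1,\ldots,Z_M$, the conditional law of $Z_{M+1}\mid Z_1,\ldots,Z_M$ is precisely the $\mathrm{IBP}(G_A\mid\mu)$ law averaged over this posterior for $\mu$.

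Next I would invoke the marked-Poisson description~(\ref{markedmeasure})--(\ref{genpair}) of an $\mathrm{IBP}(G_A\mid\mu)$ draw: to a completely random measure with atoms of weights $\{\sigma_k\}$ at locations $\{\eta_k\}$ one attaches conditionally independent marks $A_k\sim G_A(\cdot\mid\sigma_k)$ and sets $Z=\sum_k A_k\delta_{\eta_k}$. Applying this to $\mu=\mu_M+\sum_{\ell=1}^K J_\ell\delta_{\omega_\ell}$ and using independence of the marks across all atoms, $Z_{M+1}$ splits into two conditionally independent pieces: the contribution of the Poissonian atoms of $\mu_M$, which by definition is an $\mathrm{IBP}(A,\rho_M B_0)$ variable $\tilde Z_{M+1}$, and the contribution $\sum_{\ell=1}^K A_\ell\delta_{\omega_\ell}$ of the $K$ fixed atoms, with $A_\ell\mid J_\ell=s$ drawn from $G_A(da\mid s)$. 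Since the features generated in $\tilde Z_{M+1}$ are almost surely disjoint from the already-chosen $\{\omega_\ell\}$, the locations in the two pieces do not collide and the sum has exactly the stated form.

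Finally I would integrate out the $J_\ell$. Conditionally on $Z_1,\ldots,Z_M$ the $J_\ell$ are independent with law $\mathbb{P}_{\ell,M}$ and independent of $\tilde Z_{M+1}$, so the conditional law of each $A_\ell$ is the mixture $\int_0^\infty G_A(da\mid y)\,\mathbb{P}_{\ell,M}(dy)$, and the $A_\ell$ remain mutually independent and independent of $\tilde Z_{M+1}$. Collecting the pieces gives the representation $\tilde Z_{M+1}+\sum_{\ell=1}^K A_\ell\delta_{\omega_\ell}$ claimed.

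The only step I expect to require genuine care is the decomposition in the second paragraph: one must justify that, conditional on $\mu$, the marks attached to the Poissonian atoms of $\mu_M$ and those attached to the finitely many fixed atoms $\omega_\ell$ are jointly independent, so that $Z_{M+1}$ is a superposition of independent pieces. This is exactly the content of the marked-Poisson structure~(\ref{markedmeasure}) together with the fact that $\mu_M$ and the fixed-atom part of the posterior are independent; everything else is the elementary observation that a mixture of products is a product of mixtures, and no computation beyond Theorem~\ref{genthm} is needed.
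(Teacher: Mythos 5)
Your proposal is correct and follows essentially the same route as the paper, which offers no separate proof beyond reading the representation off from the posterior decomposition of $\mu$ in Theorem~\ref{genthm}(ii) together with the marked-Poisson definition of $\mathrm{IBP}(G_{A}|\mu)$. Your write-up simply makes explicit the steps the paper leaves implicit (conditional independence of $Z_{M+1}$ from the data given $\mu$, the split of the marks over the Poissonian and fixed atoms, and the mixing over $\mathbb{P}_{\ell,M}$), all of which are sound.
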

\begin{rem} It should be evident that it easy to make adjustments for the case where the $Z_{1},\ldots,Z_{M}|\mu$ are independent rather than iid. Furthermore semi-parametric models may be treated as in~\cite{James2003,IJ2004}, see also~\cite{James2005}.
\end{rem}

\subsection{Details for the Bernoulli, Poisson, and Negative-Binomial IBP type processes}
Before we continue with a general discussion, we take a look at specifics in the Poisson, Bernoulli, and Negative-Binomial cases. Note while we can just employ Theorem~\ref{genthm} and Proposition~\ref{Zproposition} directly and list out results, we provide some specific developments of those general arguments used in these special cases for illustrative purposes. 
Note if $G_{A}(\cdot|s)$ corresponds to  Poisson$(bs):=\mathrm{Poi}(bs),$  Bernoulli$(s):=\mathrm{Ber}(s)$, or Negative-Binomial$(r,s)$, denoted as $\mathrm{NB}(r,s)$, random variable then respectively the probability mass function with arguments ${a_{i,\ell}}$
is in the Poisson$(bs)$ case
$$
\mathbb{P}(A_{i,\ell}=a_{i,\ell}|s)= \frac{b^{a_{i,\ell}}s^{a_{i,\ell}}}{a_{i,\ell}!}{\mbox e}^{-bs}, a_{i,\ell}=0,1,\ldots
$$
In the Bernoulli$(s)$ case,
$$
\mathbb{P}(A_{i,\ell}=a_{i,\ell}|s)=s^{a_{i,\ell}}{(1-s)}^{a_{i,\ell}}, a_{i,\ell}=0,1,
$$
and in the Negative-Binomial $(r,s)$ case 
$$
\mathbb{P}(A_{i,\ell}=a_{i,\ell}|s)={a_{i,\ell}+r-1\choose a
_{i,\ell}}s^{a_{i,\ell}}{(1-s)}^{r}, a_{i,\ell}=0,1,\ldots
$$
Hence in the respective cases $\mathbb{P}(A=0|s)=1-\pi_{A}(s)$ takes the values 
$$
{\mbox e}^{-bs},1-s{\mbox { and }} {(1-s)}^{r},
$$
where in the latter two cases $s\in[0,1].$ Furthermore, $h_{i,\ell}(s)$ given in~(\ref{hfunction}) takes the respective forms
$$
\frac{b^{a_{i,\ell}}s^{a_{i,\ell}}}{a_{i,\ell}!}, {\left(\frac{s}{1-s}\right)}^{a_{i,\ell}},{\mbox { and }}{a_{i,\ell}+r-1\choose a
_{i,\ell}}s^{a_{i,\ell}}
$$
Now setting $c_{\ell,M}=\sum_{i=1}^{M}a_{i,\ell},$ it follows that $\prod_{i=1}^{M}h_{i\ell}(s),$ takes the form
$$
\frac{b^{c_{\ell,M}}s^{c_{\ell,M}}}{\prod_{i=1}^{M}a_{i,\ell}!}, 
{\left(\frac{s}{1-s}\right)}^{c_{\ell,M}},{\mbox { and }}s^{c_{\ell,M}}\prod_{i=1}^{M}{a_{i,\ell}+r-1\choose a
_{i,\ell}}
$$
Note to simplify notation we shall do the remaining calculations for homogeneous $\rho.$ Readers can easily make the adjustments to the case of $\rho(s|\omega).$ First some notation and known facts are introduced. 
\begin{rem}
It is important to note that the points and notation described next can be found in Gnedin and Pitman~\cite{GnedinPitman2} and James~\cite{JamesNTR}. Which alludes to connection between the Poisson, Bernoulli, and Negative-Binomial IBP models and Regenerative Compositions/ Neutral to the Right processes. Many explicit calculations and examples for these IBP models can be deduced from those works.  We encourage the reader to take a look at those works for such details. 
\end{rem}
Let $T=\sum_{k=1}^{\infty}\Delta_{k}$ be an infinitely divisible random variable where $(\Delta_{k})$ are the ranked jumps of a subordinator determined by the Levy density $\tau_{\infty},$ then it follows that $\mathbb{E}[{\mbox e}^{-\lambda T}]={\mbox e}^{-\phi(\lambda)},$ where
$$
\phi(\lambda)=\phi_{\infty}(\lambda):=\int_{0}^{\infty}(1-{\mbox e}^{-\lambda s})\tau_{\infty}(s)ds.
$$
From $(\Delta_{k})$ one can construct an infinitely divisible random variable $U=\sum_{k=1}^{\infty}(1-{\mbox e}^{-\Delta_{k}})$ whose points are determined by a Levy density 
$$\tau_{[0,1]}(u)=(1-u)^{-1}\tau_{\infty}(-\log(1-u)), u\in [0,1]$$
and there is also the converse relation
$$\tau_{\infty}(y)={\mbox e}^{-y}\tau_{[0,1]}(1-{\mbox e}^{-y}), y\in (0,\infty).$$ Furthermore
$$ \phi(\lambda)=\phi_{[0,1]|}(\lambda)=\int_{0}^{1}(1-{(1-u)}^{\lambda})\tau_{[0,1]}(du)$$
which is equivalent to 
$$
\phi_{[0,1]}(\lambda)=\int_{0}^{1}\lambda{(1-u)}^{\lambda-1}
\left[\int_{u}^{1}\tau_{[0,1]}(dv)\right]du.$$
\begin{rem}
Note taking $\tau_{[0,1]}(u)=\theta u^{-1}(1-u)^{\beta-1}$ gives the homogeneous beta process, and it can be read from~Gnedin\cite{GnedinS} that for an integer $M,$
$$
\phi(M):=\phi_{[0,1]}(M)=\sum_{k=1}^{M}\frac{\theta}{\beta+k-1},
$$
which is the term appearing in $\mathbb{P}(Z_{1},\ldots, Z_{M})$ in the Bernoulli IBP case under a beta process CRM. 
\end{rem}

\subsubsection{Poisson IBP Case}
Call $Z_{1},\ldots,Z_{M}|\mu$ in this model, iid $(\mathrm{Poi}(bs)|\mu)$ processes. It follows that if $\mu$ is determined by the Levy density $\rho(s)$ then it follows for each M that,
$$
\rho_{M}(s)={\mbox e}^{-bM s}\rho(s):=\tilde{\rho}_{bM}(s)
$$
for any L\'evy density $\rho.$ 
Hence denote the marginal distribution of $Z_{1}$ as $\mathrm{IBP}(\mathrm{Poi}(bs),\rho B_{0}).$
Furthermore, 
$$
\psi(f_{M}):={\phi}_{\infty}(bM):=\int_{0}^{\infty}(1-{\mbox e}^{-bMs})\rho(s)ds,
$$
where $\tau_{\infty}:=\rho$.
Hence setting $C_{\ell,b}={b^{c_{\ell,M}}}/{\prod_{i=1}^{M}a_{i,\ell}!}$ the joint distribution in~(\ref{mainjoint3})
is
\begin{equation}
\label{PoissonJoint}
\mathcal{P}(dN|\tilde{\rho}_{bM}B_{0},\mathbf{s,\omega}){\mbox e}^{-{\phi}_{\infty}(bM)}
\prod_{\ell=1}^{K}C_{\ell,b}
s^{c_{\ell,M}}_{\ell}{\mbox e}^{-(bM)s_{\ell}}
\rho(s_{\ell})B_{0}(d\omega_{\ell}).
\end{equation}

It should be  evident from (\ref{PoissonJoint}) that the marginal distribution in the Poisson case is given as 
\begin{equation}
\label{poissonZ}
\mathbb{P}(Z_{1},\ldots,Z_{M})={\mbox e}^{-{\phi}_{\infty}(bM)}
\prod_{\ell=1}^{K}C_{\ell,b}\left[\kappa_{c_{\ell,M}}(\tilde{\rho}_{bM})\right]B_{0}(d\omega_{\ell}),
\end{equation}
where
$$
\kappa_{j}(\tilde{\rho}_{bM})=\int_{0}^{\infty}
y^{j}{\mbox e}^{-(bM)y}
\rho(y)dy.
$$
In addition, the posterior distribution of $N|Z_{1},\ldots, Z_{M}$ corresponds to that of the random measures
\begin{equation}
\label{Poissondecomp}
N_{M}+\sum_{\ell=1}^{K}\delta_{J_{\ell},\omega_{\ell}}
\end{equation}
where $N_{M}$ is $\mathrm{PRM}(\tilde{\rho}_{bM}B_{0})$ and independent of this, the distribution of $(J_{\ell})|Z_{1},\ldots,Z_{M}$ are conditionally independent with density
\begin{equation}
\label{PoissonJumps}
\mathbb{P}(J_{\ell}\in ds)\propto s^{c_{\ell,M}}{\mbox e}^{-(bM)s}
\rho(s).
\end{equation}
This implies that $\mu|Z_{1},\ldots, Z_{M}$ is equivalent in distribution to $\mu_{M}+\sum_{\ell=1}^{K}J_{\ell}\delta_{\omega_{\ell}}$ where $\mu_{M}$ is $\mathrm{CRM}(\tilde{\rho}_{bM}B_{0}).$ Note further if we refer to the marginal distribution of $Z_{1}$ as $\mathrm{IBP}(\mathrm{Poi}(bs),{\rho}B_{0})$ our analysis shows that $Z_{M+1}|Z_{1},\ldots, Z_{M}$ can be written as 
$$
\tilde{Z}_{M+1}+\sum_{\ell=1}^{K}A_{\ell}\delta_{\omega_{\ell}},
$$
where $\tilde{Z}_{M+1}$ is $\mathrm{IBP}(\mathrm{Poi}(bs),\tilde{\rho}_{bM}B_{0}),$ and $A_{\ell}|(J_{\ell})$ are conditionally independent Poisson$(bJ_{\ell}),$ variables. 

\begin{rem} It is easy to see that although we cannot expect $N|Z_{1},\ldots,Z_{M}$ and hence $\mu|Z_{1},\ldots,Z_{M}$ to be fully conjugate models, there is a sense of partial conjugacy in the following manner. Suppose that the mean intensity of 
$N$ is $\mathbb{E}[N(ds,d\omega)]={\mbox e}^{-\beta s}\rho(s)B_{0}(d\omega),$ then within the posterior decomposition~(\ref{Poissondecomp}) the PRM $N_{M}$ has mean intensity  
$$\mathbb{E}[N_{M}(ds,d\omega)]={\mbox e}^{-(\beta+bM) s}\rho(s)B_{0}(d\omega).$$
\end{rem}

\subsubsection{Gamma, Stable and generalized gamma process priors.}
We now give details for the Gamma-Poisson model that was investigated in Titsias~\cite{Titsias} and then move beyond that to the case where the gamma process is replaced by a generalized gamma process.  There are no known results for this flexible and natural extension, We should note here that the gamma process model of Titsias bears some similarities to Lo (1982)~\cite{Lo1982}. The latter is a reference which is apparently unknown within the general IBP literature. Lo~\cite{Lo1982} shows that if a weighted gamma process is used as a prior for the mean intensity of a Poisson process then its posterior distribution is also a weighted gamma process given the observations from $n$ Poisson Processes. In this sense it can be seen as a precursor to the class of IBP models.

We describe a scalar version of a weighted gamma process, which is just based on a gamma random variable $T$ with shape parameter $\theta$ and scale $1/\beta,$ with law denoted as Gamma$(\theta,1/\beta).$ That is to say if $T$ is such a variable then $\beta T$ is Gamma$(\theta,1).$ This corresponds to $N$ a PRM with mean intensity
$$\mathbb{E}[N(ds,d\omega)]=\tilde{\rho}_{\theta,\beta}=\theta s^{-1}{\mbox e}^{-\beta s}B_{0}(d\omega),$$ and hence $\mu$ is a weighted gamma process with parameters $(\beta,\theta B_{0})$ Note in Lo's case one takes $\beta$ to be a function $\beta(\omega),$ which again presents no extra difficulties here. From the above discussion it follows that 
$\mathbb{E}[N_{M}(ds,d\omega)]=\theta s^{-1}{\mbox e}^{-(\beta+bM) s}B_{0}(d\omega),$ and hence $\mu_{M}$ is a weighted gamma process with parameters $(\beta+bM,\theta B_{0}).$ Furthermore the distribution of $(J_{\ell}),$ is such that $G_{\ell}:=(\beta+bM)J_{\ell}$ are conditionally independent Gamma$(c_{\ell,M},1)$ random variables. Hence the $A_{\ell}$ are negative binomial variables as described in Titsias.  Furthermore the L\'evy exponent is $\phi_{\infty}(bM):=\theta\log(1+bM/\beta)$ and  hence the joint distribution $\mathbb{P}(Z_{1},\ldots, Z_{M})$ is given by 
$$
{\left(\frac{1}{\beta+bM}\right)}^{\theta+c_{M}}\beta^{\theta}\theta^{K}\prod_{\ell=1}^{K}\Gamma(c_{\ell,M})C_{\ell,b}B_{0}(d\omega_{\ell})
$$
where $c_{M}=\sum_{\ell=1}^{K}c_{\ell,M}.$ Note under $Z_{M+1}|Z_{1},\ldots, Z_{M};$ 
$$\tilde{Z}_{M+1}\sim\mathrm{IBP}(\mathrm{Poi}(bs),\tilde{\rho}_{\theta,\beta+bM}B_{0})$$
and the $A_{\ell}|G_{\ell}$ are Poisson$(bG_{\ell}/(bM+\beta)).$ Hence the marginals of $A_{\ell}$ are Negative-Binomial.

Now suppose that for $0<\alpha<1$ $N$ is PRM with mean intensity 
$$
\mathbb{E}[N(ds,d\omega)]/(B_{0}(d\omega)ds)=\tau_{\infty}(s):=\rho_{\alpha,\beta}(s):=\frac{\alpha s^{-\alpha-1}{\mbox e}^{-s\beta}}{\Gamma(1-\alpha)}
$$
Then $\mu$ is a generalized gamma process  with parameters $(\alpha,\beta, B_{0})$ corresponding to a random variable $T$ with density ${\mbox e}^{-[t\beta-\beta^{\alpha}]}f_{\alpha}(t),$  where $f_{\alpha}$ is the density of a positive Stable random variable with index $0<\alpha<1.$ Furthermore 
$$\phi(bM):=\psi_{\alpha,\beta}(bM)=[(\beta+bM)^{\alpha}-\beta^{\alpha}].$$

It follows that $\mu_{M}$ is a generalized gamma process with parameters $(\alpha,\beta+bM, B_{0})$ and the jumps satisfy $G_{\ell}:=(bM+\beta)J_{\ell}$ are independent Gamma  $(c_{\ell,M}-\alpha,1)$ random variables. Hence  $\mathbb{P}
(Z_{1},\ldots, Z_{M})$ can be expressed via 
$$
{\mbox e}^{-\psi_{\alpha,\beta}(bM)}\alpha^{K}
{(bM+\beta)}^{K\alpha-c_{M}}\prod_{\ell=1}^{K}\frac{\Gamma(c_{\ell.M}-\alpha)}
{\Gamma(1-\alpha)}C_{\ell,b}.
$$
Note under $Z_{M+1}|Z_{1},\ldots, Z_{M};$ $\tilde{Z}_{M+1}$ is $\mathrm{IBP}(\mathrm{Poi}(bs),\rho_{\alpha,\beta+bM}B_{0})$
and the $A_{\ell}|G_{\ell}$ are Poisson$(bG_{\ell}/(bM+\beta)).$ Hence the marginals of $A_{\ell}$ are again Negative-Binomial. 
\begin{rem}Note it is interesting to compare the generalized gamma case in this setting to cases of L\'evy moving average processes~\citep[section 4.4.1,p.23]{James2005}, neutral to the right processes~\citep[p. 433,section 6.1]{JamesNTR}  and normalized random measures from an application of~\cite{JLP2} as it appears in \citep[example 2.4, p.341]{FavaroTeh}.
\end{rem}
\subsubsection{Bernoulli case-the oiginal IBP}
When $A|s$ is $Ber(s)$ this corresponds to the original IBP$(\mu)$ structure where details have been worked out primarily in the case where $\mu$ is some variant of a beta process,~see \cite{Thibaux,TehG}. In those cases results are deduced from \cite{KimY}. Here we provide details for any $\rho$. Again  in order to indicate how updates are made we write
for any $\beta\ge 0,$ and any $\rho$ defined on $[0,1],$ set 
$$
\hat{\rho}_{\beta}(s)={[1-s]}^{\beta}\rho(s).
$$
The results are stated using this choice, however there is no loss of generality to set $\beta=0.$
Assuming that $\mathbb{E}[N(ds,d\omega)]=\hat{\rho}_{\beta}(s)B_{0}(d\omega)ds,$ the joint distribution of $(N,(J_{\ell}, Z_{1},\ldots, Z_{M})$
can be expressed as
$$
\mathcal{P}(dN|\hat{\rho}_{\beta+M}B_{0},(s,\omega)){\mbox e}^{-\hat{\phi}_{[0,1]}(\beta+M)}\prod_{\ell=1}^{K}{\left(\frac{s_{\ell}}{1-s_{\ell}}\right)}^{c
_{\ell,M}}\hat{\rho}_{\beta+M}(ds_{\ell}|\omega_{\ell})B_{0}(d\omega_{\ell})
$$
where with respect to $\rho$
\begin{eqnarray}
\hat{\phi}_{[0,1]}(\beta+M)&=&\phi_{[0,1]}(\beta+M)-\phi_{[0,1]}(\beta)\\\nonumber
&=& \int_{0}^{1}(1-{(1-s)}^{M})\hat{\rho}_{\beta}(s)ds
\end{eqnarray}

Integrating out $N$ leaves a joint distribution of the latent jumps, and the latent feature structure, which we write in a different way as, 
$$
{\mbox e}^{-\hat{\phi}_{[0,1]}(\beta+M)}\prod_{\ell=1}^{K}{s_{\ell}}^{c
_{\ell,M}}{(1-s_{\ell})}^{\beta+M-c_{\ell,M}}\rho(ds_{\ell}|\omega_{\ell})B_{0}(d\omega_{\ell}),
$$
which can be expressed as
$$
\left[\prod_{\ell=1}^{K}\mathbb{P}(J_{\ell}\in d_{s_{\ell}}|c_{\ell,M})\right]\mathbb{P}(Z_{1},\ldots,Z_{M})
$$
where for $\hat{\kappa}_{c_{\ell,M}}(\rho)=\int_{0}^{1}{s}^{c
_{\ell,M}}{(1-s)}^{-c_{\ell,M}}\rho(s)ds,$
$$
\mathbb{P}(Z_{1},\ldots,Z_{M})={\mbox e}^{-\hat{\phi}_{[0,1]}(\beta+M)}\prod_{\ell=1}^{K}
\hat{\kappa}_{c_{\ell,M}}(\hat{\rho}_{\beta+M})B_{0}(d\omega_{\ell})
$$
and hence showing that the distribution of $J_{\ell}$ has density proportional to $${[s/(1-s)]}^{c_{\ell,M}}\rho_{\beta+M}(s).$$
It follows that $N|Z_{1},\ldots, Z_{M}$ can be expressed as 
$N_{M}+\sum_{\ell=1}^{K}\delta_{J_{\ell},\omega_{\ell}},$ 
where $N_{M}$ is a PRM with $\mathbb{E}[N_{M}(ds,d\omega)]=\rho_{\beta+M}(s)dsB_{0}(d\omega)$ and the distribution of the jumps $(J_{\ell})$ have been described above. It follows that $Z_{1}$ under these specifications is marginally $\mathrm{IBP}(\mathrm{Ber}, \hat{\rho}_{\beta}B_{0})$ which is equivalent in distribution to $\sum_{k=1}^{\xi(\kappa(\hat{\rho}_{\beta}))}\delta_{\tilde{\omega}_{k}},$ where 
$\xi(\kappa(\hat{\rho}_{\beta}))$ is a Poisson random variable with mean 
$
\kappa(\hat{\rho}_{\beta})=\int_{0}^{1}s\hat{\rho}_{\beta}(s)ds.
$
$Z_{M+1}|Z_{1},\ldots, Z_{M}$ is equivalent to $\tilde{Z}_{M+1}+\sum_{\ell=1}^{K}A_{\ell}\delta_{\omega_{\ell}},$ where $\tilde{Z}_{M+1}$ is $\mathrm{IBP}(\mathrm{Ber}, \hat{\rho}_{\beta+M}B_{0})$
and the $A_{\ell}$ are independent Bernoulli variables with success probability 
$$
\mathbb{E}[J_{\ell}|c_{\ell,M}]=\frac{\int_{0}^{1}{s}^{c
_{\ell,M}+1}{(1-s)}^{-c_{\ell,M}}\hat{\rho}_{\beta+M}(s)ds}{\hat{\kappa}_{c_{\ell,M}}(\hat{\rho}_{\beta+M})}.
$$
In particular if $\rho_{\beta}(s)=\theta s^{-\alpha-1}(1-s)^{\beta+\alpha-1},$ as in \cite{TehG}, the $(J_{\ell})$ are independent $\mathrm{Beta}(
c_{\ell,M}-\alpha,\beta+\alpha+M-c_{\ell,M})$ random variables then the customer $M+1$ chooses an existing dish $\omega_{\ell}$ with probability
$$
\mathbb{E}[J_{\ell}|c_{\ell,M}]=\frac{c_{\ell,M}-\alpha}{M+\beta}.
$$
Set $\beta=1$ and $\alpha=0$ to recover the original IBP model of Griffiths and Ghahramani~\cite{GriffithsZ}. Note for this case,  one can choose $\beta>-\alpha$, such that $\beta+\alpha>0,$ which are the specifications for the case where $\mu$ is a stable-Beta process discussed in~\cite{TehG}. See section \ref{bD} for a multivariate extension of this process we call a stable-Beta-Dirichlet process. 
\begin{rem}One can compare the beta process in this setting with that of a smoothed spatial beta process arising in the context of L\'evy moving averages \cite[section 4.4.2, p. 25-26]{James2005}. 
\end{rem}
\subsubsection{Negative-Binomial}
As mentioned previously several authors have investigated the case where  $A|s$ is $NB(r,s),$ and primarily this has been coupled with a CRM that is a beta process.  With some efforts~\cite{Broderick2} were able to show that the beta process was conjugate in this setting,[see also \cite{Broderick4}]. Again here it is demonstrated that applying the PPC makes analysis of considerably generalized notions of this case rather transparent.  As should be expected, the NB case bears some similarities to the Bernoulli case, Here we start with $N$ based on a Levy density $\hat{\rho}_{\beta r}(s)=[1-s]^{\beta r}\rho(s)$
The joint distribution can be expressed as
\begin{equation}
\label{NBjoint}
\mathcal{P}(dN|\hat{\rho}_{(\beta+M)r}B_{0},(s,\omega))
\left[\prod_{\ell=1}^{K}\mathbb{P}(J_{\ell}\in d_{s_{\ell}}|c_{\ell,M})\right]\mathbb{P}(Z_{1},\ldots,Z_{M})
\end{equation}
where $
\left[\prod_{\ell=1}^{K}\mathbb{P}(J_{\ell}\in d_{s_{\ell}}|c_{\ell,M})\right]\mathbb{P}(Z_{1},\ldots,Z_{M})
$
is equivalent to 
$$
{\mbox e}^{-\hat{\phi}_{[0,1]}((\beta+M)r)}
\prod_{\ell=1}^{K}{s_{\ell}}^{c
_{\ell,M}}{(1-s_{\ell})}^{(M+\beta)r}\rho(ds_{\ell})R_{\ell,M}B_{0}(d\omega_{\ell})
$$
where now 
$$
R_{\ell,M}=\prod_{i=1}^{M}{a_{i,\ell}+r-1\choose a
_{i,\ell}}.
$$
It is evident that the distribution of the $J_{\ell}$ has density proportional to ${s}^{c
_{\ell,M}}\hat{\rho}_{(\beta+M)r}(s).$ Furthermore setting $\kappa_{j}(\rho)=\int_{0}^{1}s^{j}\rho(s)ds,$ it follows that 
$$
\mathbb{P}(Z_{1},\ldots,Z_{M})=
{\mbox e}^{-\hat{\phi}_{[0,1]}((\beta+M)r)}
\prod_{\ell=1}^{K}\kappa_{c_{\ell,M}}(\hat{\rho}_{(\beta+M)r)})
R_{\ell,M}B_{0}(d\omega_{\ell}).
$$
We quickly identify other relevant quantities. $N_{M}$ is a PRM with mean satisfying $$\mathbb{E}[N_{M}(ds,d\omega)]/(B_{0}(d\omega)ds)=\hat{\rho}_{(\beta+M)r}(s).$$
Hence $\mu_{M}$ is $\mathrm{CRM}(\hat{\rho}_{(\beta+M)r}B_{0}),$ $Z_{M+1}|Z_{1},\ldots, Z_{M}$ is such that
$$\tilde{Z}_{M+1}\sim\mathrm{IBP}(NB(r,s), \hat{\rho}_{(\beta+M)r}B_{0})$$ and the $A_{\ell}$ are conditionally $\mathrm{NB}(r,J_{\ell})$ variables.

\subsection{Describing the marginal distribution of  $Z$ via a method of decompositions}
The results above provide a general framework to describe generative processes analogous to the Indian Buffet sequential latent feature scheme. What remains is to describe the marginal distributions of $Z_{1}\sim \mathrm{IBP}(A,\rho B_{0}),$ and hence $\tilde{Z}_{M+1}\sim \mathrm{IBP}(A,\rho_{M} B_{0}),$ which makes up the un-sampled part of $Z_{M+
1}|Z_{1},\ldots, Z_{M}.$ 

Since $Z:=Z_{1}\overset{d}=\sum_{k=1}^{\infty}A_{k}\delta_{\tilde{\omega}_{k}}$ is composed of $(A_{k},\tau_{k},\tilde{\omega}_{k})$ the theory of marked Poisson point processes tells us that the unconditional Laplace functional of $$Z(f)=\sum_{k=1}^{\infty}A_{k}f(\omega_{k})$$ is given by $\mathbb{E}[{\mbox e}^{-Z(f)}]={\mbox e}^{-\Phi(f)},$ where 
$$
\Phi(f)=\int_{\Omega}\int_{\Theta}(1-e^{-af(\omega)})I_{\{a\neq 0\}}Q_{A}(da|\omega)B_{0}(d\omega)
$$
where
$
Q_{A}(da|\omega)=\left[\int_{0}^{\infty}G_{A}(da|s)\rho(s|\omega)ds\right],
$
and the distribution of the $A_{k}$ is determined by the measure $I_{a\neq 0}Q_{A}(da|\omega).$ This is clear since at $a=0,$ $(1-e^{-af(\omega)})$ is zero. 
If we assume that $\rho(s|\omega)$ corresponds to an infinite activity process then $\int_{0}^{\infty}\rho(s|\omega)ds=\infty.$ This means essentially that if $G_{A}(da|s)$ does not assign mass to $0,$ $I_{\{a\neq 0\}}Q_{A}(da|\omega)$ is not a finite measure. So for instance the $(A_{k}),$ after marginalizing out $(\tau_{k}),$ could be the jumps of a gamma process.  For concreteness we pause to give an example of that case. 

Suppose that $S_{\alpha}$ is a positive stable random variable of index $0<\alpha<1$ with log Laplace exponent evaluated at $\beta$ equal to $\beta^{\alpha}$ then set 
$$
G_{A}(da|t)=\mathbb{P}(S^{-\alpha}_{\alpha}t\in da){\mbox { and }}\rho(t)=\alpha t^{-1}{\mbox e}^{-\beta t},
$$
hence $\rho$ is the Levy density of a gamma process with shape $\alpha$ and scale $1/\beta.$ Then from Aldous and Pitman~\citep[eq.(33)]{AldousPitR}there is the identity
$$
\alpha t^{-1}{\mbox e}^{-\beta t}\mathbb{P}(S^{-\alpha}_{\alpha}t\in da)=
a^{-1}\mathbb{P}(a^{1/\alpha}S_{\alpha}\in dt){\mbox e}^{-t\beta}
$$
which shows that in this instance
$$
Q_{A}(da)=\int_{0}^{\infty}G_{A}(da|t)\rho(t)dt=a^{-1}{\mbox e}^{-a\beta^{\alpha}}da.
$$
That is to say the marginal distribution of $Z$ corresponds to a Gamma process. 

Now if $1-\pi_{A}(s)=\mathbb{P}(A=0|s)>0,$ as we have assumed, denotes the probability that $A=0$ under $G_{A}$ then if $\varphi(\omega)=\int_{0}^{\infty}\pi_{A}(s)\rho(s|\omega)ds<\infty$ the $A_{k}|\omega_{k}$ are independent with distribution 
$$
\tilde{Q}_{A}(da|\omega_{k})=I_{\{a\neq 0\}}Q_{A}(da|\omega_{k})/\varphi(\omega_{k}).
$$
Of course if $\rho$ is homogeneous the $A_{k}$ would be iid $\tilde{Q}_{A}.$ In this case set $\varphi(\omega):=\varphi<\infty$ then it follows that the marginal process $Z$ can be represented as $Z=\sum_{k=1}^{\xi(\varphi)}X_{k}\delta_{\tilde{\omega}_{k}}$ where $\xi(\varphi)$ is a Poisson random variable with mean $\varphi$ and the $X_{i}$ are iid  $\tilde{Q}_{A}.$ 
However even in this ideal situation. it will be difficult to sample directly from $\tilde{Q}_{A}$ for arbitrary $\rho.$ We now describe the marginal distribution of $Z$ utilizing an infinite sequence of iid pairs of variables $((H_{i},X_{i})).$
\begin{prop}
\label{Hdecomp} 
Let $Z|\mu$ be $\mathrm{IBP}(G_A|\mu)$ where $\mu$ is a $\mathrm{CRM}(\rho B_{0})$ with $\rho(s)$ a homogeneous L\'evy density. Then the $Z$ is said to have an $\mathrm{IBP}(A,\rho B_{0})$ marginal distribution. Suppose that $\varphi:=\varphi(\rho|\pi_{A})=\int_{0}^{\infty}\pi_{A}(s)\rho(s)ds<\infty,$ then there exists a sequence of iid pairs $((H_{i},X_{i})),$ which we refer to as an $\mathrm{IBP}(A,\rho )$ process, such that 
$$
Z\overset{d}=\sum_{i=1}^{\xi(\varphi)}X_{i}\delta_{\tilde{\omega}_{i}}
$$
where $\xi(\varphi)$ is a Poisson random variable with mean $\varphi$ independent of  $((H_{i},X_{i})).$ The pair have the following distributional properties:
\begin{enumerate}
\item[(i)] $X_{i}|H_{i}=s$ has distribution, not depending on $\rho,$  
$$
\frac{
I_{\{a\neq 0\}}G_{A}(da|s)}{\pi_{A}(s)},
$$
which is equivalent to the conditional distribution of $A|A\neq 0$ under $G_{A}(\cdot|s),$
and $H_{i}$ has marginal density
$$
\frac{\pi_{A}(s)\rho(s)}{\varphi(\rho|\pi_{A})}.
$$
\item[(ii)] The marginal distribution of $X_{i}$ is 
$$
\mathbb{P}(X_{i}\in da)=\frac{
I_{\{a\neq 0\}}\int_{0}^{\infty}G_{A}(da|s)\rho(s)ds}{\varphi(\rho|\pi_{A})}
$$
and the distribution of $H_{i}|X_{i}=a$ is given by
$$
\mathbb{P}(H_{i}\in ds|X_{i}=a)=\frac{
I_{\{a\neq 0\}}G_{A}(da|s)\rho(s)ds}{\int_{0}^{\infty}G_{A}(da|v)\rho(v)dv}.
$$
\end{enumerate}
\end{prop}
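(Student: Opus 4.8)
The plan is to read off everything from the marked Poisson point process representation of $Z$ together with the restriction (thinning) property of Poisson random measures. Recall from~(\ref{markedmeasure}) and~(\ref{genpair}) that $Z=\sum_{k=1}^{\infty}A_{k}\delta_{\tilde{\omega}_{k}}$, where $((A_{k},\tau_{k},\tilde{\omega}_{k}))$ are the points of a PRM on $\mathbb{R}\times(0,\infty)\times\Omega$ with mean intensity $G_{A}(da|s)\rho(s)\,ds\,B_{0}(d\omega)$, and here $\rho$ is homogeneous and $B_{0}$ is a probability measure. Since the atoms with $A_{k}=0$ do not contribute to $Z$, the first step is to restrict this PRM to the set $\{a\neq 0\}$; by the standard restriction theorem for Poisson random measures (valid whether or not $\rho$ has infinite total mass), the retained points $((A_{k},\tau_{k},\tilde{\omega}_{k})\colon A_{k}\neq 0)$ again form a PRM, now with mean intensity $I_{\{a\neq 0\}}G_{A}(da|s)\rho(s)\,ds\,B_{0}(d\omega)$, and $Z$ is unchanged.

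Next I would compute the total mass of this restricted intensity. Using that $B_{0}$ is a probability measure and that $\int_{a\neq 0}G_{A}(da|s)=\pi_{A}(s)$, the total mass equals $\int_{\Omega}\int_{0}^{\infty}\pi_{A}(s)\rho(s)\,ds\,B_{0}(d\omega)=\int_{0}^{\infty}\pi_{A}(s)\rho(s)\,ds=\varphi$, which is finite by hypothesis. A Poisson random measure with finite mean measure $\Lambda$ of total mass $\varphi$ admits the mixed-binomial representation: the number of points is $\xi(\varphi)\sim\mathrm{Poisson}(\varphi)$ and, conditionally on this number, the points are i.i.d.\ with common law $\Lambda/\varphi$. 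Applying this with $\Lambda(da,ds,d\omega)=I_{\{a\neq 0\}}G_{A}(da|s)\rho(s)\,ds\,B_{0}(d\omega)$ and writing $X_{i}:=A_{k}$, $H_{i}:=\tau_{k}$, $\tilde{\omega}_{i}:=\tilde{\omega}_{k}$ over the retained atoms gives $Z\overset{d}{=}\sum_{i=1}^{\xi(\varphi)}X_{i}\delta_{\tilde{\omega}_{i}}$, with the triples $((H_{i},X_{i},\tilde{\omega}_{i}))$ i.i.d.\ of joint law $\varphi^{-1}I_{\{a\neq 0\}}G_{A}(da|s)\rho(s)\,ds\,B_{0}(d\omega)$. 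Because this law factors as a product of $B_{0}(d\omega)$ with $\varphi^{-1}I_{\{a\neq 0\}}G_{A}(da|s)\rho(s)\,ds$, the locations $\tilde{\omega}_{i}$ are i.i.d.\ $B_{0}$ and independent of the pairs $((H_{i},X_{i}))$ and of $\xi(\varphi)$.

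It then remains only to disintegrate the joint law $\varphi^{-1}I_{\{a\neq 0\}}G_{A}(da|s)\rho(s)\,ds$ of $(H_{i},X_{i})$ in the two orders. Integrating out $a$ gives the $H_{i}$ marginal $\pi_{A}(s)\rho(s)/\varphi$, hence the conditional law of $X_{i}$ given $H_{i}=s$ equal to $I_{\{a\neq 0\}}G_{A}(da|s)/\pi_{A}(s)$, which is precisely the law of $A$ given $A\neq 0$ under $G_{A}(\cdot|s)$; this proves~(i). Integrating out $s$ instead gives the $X_{i}$ marginal $\varphi^{-1}I_{\{a\neq 0\}}\big[\int_{0}^{\infty}G_{A}(da|s)\rho(s)\,ds\big]$, and Bayes' rule then yields the stated conditional law of $H_{i}$ given $X_{i}=a$ in~(ii). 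I do not anticipate a genuine obstacle: the only points requiring a little care are the legitimacy of the restriction step when $\rho$ is of infinite activity (so that the \emph{unrestricted} PRM has infinitely many atoms, only finitely many of which survive) and the clean factorization of the $\Omega$-coordinate, both of which are standard consequences of the product form of the intensity.
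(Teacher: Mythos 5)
Your proposal is correct and follows essentially the same route as the paper: the paper's proof likewise invokes the marked Poisson point process representation of $Z$, notes that $\varphi<\infty$ reduces everything to manipulating the joint measure $I_{\{a\neq 0\}}G_{A}(da|s)\rho(s)\,ds$, and reads off (i) and (ii) by disintegrating it in the two orders. You simply make explicit the restriction-to-$\{a\neq 0\}$ and mixed-binomial steps that the paper leaves as "results we discussed previously for marked Poisson processes."
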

\begin{proof} The result is straightforward as $\varphi<\infty,$ coupled with the results we discussed previously for marked Poisson processes, allows one to manipulate a joint distribution proportional to 
$$
I_{\{a\neq 0\}}G_{A}(da|s)\rho(s)ds.
$$
\end{proof}
 
\section{Essentials for implementation}
As mentioned previously one does not need to understand the PPC in order to use the results that have been obtained. The ingredients for that are listed below. 
\subsection{Steps for describing the posterior distribution, marginal distributions and related quantities}
\label{posterioringredients}
\begin{enumerate}
\item[(i)]Specify $G_{A}(\cdot|s)$
\item[(ii)] Identify $\pi_{A}(s)=\mathbb{P}(A\neq 0|s)$
\item[(iii)]For each $M=0,1,2,\ldots,$ $f_{M}(s,\omega)=-M\log(1-\pi_{A}(s))$ implying 
$$
\nu_{f_{M}}(s,\omega)={[1-\pi_{A}(s)]}^{M}\rho(s|\omega)B_{0}(d\omega)ds
$$
and
$$
\Psi(f_{M})=\int_{\Omega}\int_{0}^{\infty}({[1-\pi_{A}(s)]}^{M})\rho(s|\omega)dsB_{0}(d\omega)
$$
Hence for each $M=0,1,2,\ldots$ define $\rho_{M}(s)={[1-\pi_{A}(s)]}^{M}\rho(s|\omega).$
\item[(iv)]
Use this to specify ${N}_{M}\sim \mathrm{PRM}(\rho_{M}B_{0}),$ ${\mu}_{M}\sim \mathrm{CRM}(\rho_{M}B_{0})$ and $\tilde{Z}_{M+1}\sim \mathrm{IBP}(A,\rho_{M}B_{0}).$
\item[(v)]Identify
$$
h_{i,\ell}(s)={\left[\frac{G_{A}(da_{i,\ell}|s_{\ell})}{
1-\pi_{A}(s_{\ell})}\right]}^{\indic_{\{a_{i,\ell}\neq 0\}}},
$$
and look for simplifications of $\prod_{i=1}^{M}h_{i,\ell}(s),$ which certainly happens when $A$ is Bernoulli, Poisson or Negative-Binomial.
\item[(vi)] Use this to get the distribution of $(J_{\ell})$ and marginal of $(Z_{1},\ldots,Z_{M})$ specified by (\ref{marginalJ}) and (\ref{margZ}). This gives the distribution of $(A_{\ell})$ specified in Proposition~\ref{Zproposition}. 
\item[(vii)] Note multiplicities (counts) seen in the cases of Bernoulli, Poisson and Negative-Binomial arise through those choices of $G_{A}$ and manifest themselves through simplifications of $h_{i,\ell}.$
\end{enumerate}
\subsection{Ingredients for sequential generalized Indian buffet schemes using Proposition~\ref{Hdecomp}}
\label{IBPingredients}
\begin{enumerate}
\item[(i)]For each $M=0,1,2,\ldots$ define $\rho_{M}(s)={[1-\pi_{A}(s)]}^{M}\rho(s).$
\item[(ii)] Calculate and check, 
$$
\varphi_{M+1}(\rho)=\int_{0}^{\infty}\pi_{A}(s)\rho_{M}(s)ds<\infty.
$$
\item[(iii)]Then if $\tilde{Z}_{M+1}$ is $IBP(A,\rho_{M}B_{0}),$ 
$$
\tilde{Z}_{M+1}\overset{d}=\sum_{k=1}^{\xi(\varphi)}X_{k}\delta_{\tilde{\omega}_{k}}
$$
where $\varphi:=\varphi_{M+1}(\rho),$ and $\xi(\varphi)$ is a Poisson$(\varphi)$ random variable, independent of $(X_{k})$ which are taken from a $\mathrm{IBP}(A,\rho_{M})$ process $((X_{i},H_{i}))$
\item[(iv)] $X_{i}|H_{i}=s$ has distribution, not depending on $\rho,$  
\begin{equation}
\label{Xcond}
\frac{
I_{\{a\neq 0\}}G_{A}(da|s)}{\pi_{A}(s)}
\end{equation}
and $H_{i}$ has marginal density
\begin{equation}
\label{Hmarg}
\frac{\pi_{A}(s)\rho_{M}(s)}{\varphi_{M+1}(\rho)}
\end{equation}
\item[(v)] The marginal distribution of $X_{i}$ is 
$$
\mathbb{P}(X_{i}\in da)=\frac{
I_{\{a\neq 0\}}\int_{0}^{\infty}G_{A}(da|s)\rho_{M}(s)ds}{\varphi_{M+1}(\rho)}
$$
and the distribution of $H_{i}|X_{i}=a$ is given by
$$
\mathbb{P}(H_{i}\in ds|X_{i}=a)=\frac{
I_{\{a\neq 0\}}G_{A}(da|s)\rho_{M}(s)ds}{\int_{0}^{\infty}G_{A}(da|v)\rho_{M}(v)dv}.
$$
\end{enumerate}.
\section{ Generalized Indian Buffet Processes: The sequential generative process for  $\mathrm{IBP}(A,\rho B_{0})$}
We can now use sections \ref{posterioringredients} and \ref{IBPingredients} to describe the sequential generative process for $\mathrm{IBP}(A,\rho,B_{0}),$ in the homogeneous case. That is to say how to sample from $Z_{1},$ and subsequently $Z_{M+1}|Z_{1},\ldots, Z_{M}.$
Since under $G_{A}$ every matrix entry  value for a feature can take a wider range of values, one needs to give this a proper interpretation. This is mostly left to the reader, but here we shall naively say that a customer selects a dish many times, or gives a scoring to that dish.  At any rate we use the basic IBP metaphor of people sequentially entering an Indian Buffet restaurant. Recall again that $\varphi(\rho)=\int_{0}^{\infty}\pi_{A}(s)\rho(ds).$

\begin{enumerate}
  \item Customer 1 selects dishes and gives them scores according to a $\mathrm{IBP}(A,\rho B_{0})$ process. This is done precisely as follows:
  \begin{enumerate}
\item[(i)] Draw a $\mathrm{Poisson}(\varphi_{1}(\rho))=J$ number of variables.
\item[(ii)] Draw $((\omega_{1},H_{1}),\ldots (\omega_{J},H_{J}))$ iid from $B_{0}$ and the distribution for $H_{i}$ specified by (\ref{Hmarg}), with $M=0$
\item[(iii)] Draw $X_{i}|H_{i}$ following (\ref{Xcond}), for $i=1,\ldots. J.$ Note if it is straightforward to sample directly from the marginal distribution of $X_{i},$ then one may bypass sampling $H_{i}$
  \end{enumerate}
  \item After $M$ customers have chosen collectively $\ell=1,\ldots,K$ distinct dishes, and assigned their scores, customer $M+1$ selects each dish $\omega_{\ell}, \ell=1,\ldots,K,$ and assigns respective scores, according to the distribution of $A_{\ell},$ where $A_{\ell}|J_{\ell}$ has conditional distribution $G_{A}(da|J_{\ell}).$  in particular the chance that $A_{\ell}|J_{\ell}$ takes the value zero is $1-\pi_{A}(J_{\ell}).$ The  distribution of $(J_{\ell})$ is specified in~(\ref{marginalJ}). 
  \item Customer M+1 also chooses and scores new dishes according to a $\mathrm{IBP}(A,\rho_{M}B_{0}),$ process $\tilde{Z}_{M+1}.$ This follows the same scheme as customer $1$ with $\rho_{M}(s)={[1-\pi_{A}(s)]}^{M}\rho(s)$ in place of $\rho$ and details provided in section \ref{IBPingredients}.
\end{enumerate}
We next provide more details for $(H_{i},X_{i})$ in the Poisson and Negative-Binomial cases. 
\subsection{$\mathrm{IBP}(\mathrm{Poi}(b\lambda), \rho)$ processes} 
In the Poisson IBP setting the random $\mathrm{IBP}(\mathrm{Poi}(b\lambda), \rho)$ process $((H_{i},X_{i}))$ actually appears in  Pitman~\cite{Pit97}.
Here again for simplicity we shall assume $\rho(\lambda)$ is a homogeneous Levy density not depending on $\omega.$ The extension is obvious. In this case $X_{i}$ takes values $j=1,\ldots,\infty$ with 
$$
\mathbb{P}(X_{i}=j)=\frac{b^{j}\int_{0}^{\infty}\lambda^{j}{\mbox e}^{-\lambda b}\rho(\lambda)d\lambda}{j!\psi(b)}.
$$
Note that $\pi_{A}(\lambda)=\mathbb{P}(A>0|\lambda)=1-{\mbox e}^{-\lambda b},$ and 
$$
\varphi:=\psi(b)=\int_{0}^{\infty}(1-{\mbox e}^{-b\lambda})\rho(\lambda)d\lambda
$$
is the relevant total mass. In addition, $\kappa_{j}(\tilde{\rho}_{b})=\int_{0}^{\infty}\lambda^{j}{\mbox e}^{-\lambda b}\rho(\lambda)d\lambda$ is the $j$-th cumulant of an infinitely divisible random variable $T_{b}$ with density ${\mbox e}^{-[bs-\psi(b)]}f_{T}(s).$
Now conditionally given all $(X_{k})$ the $H_{i}$ are conditionally independent with,
$$
\mathbb{P}(H_{i}\in ds|X_{i}=j)=\frac{s^{j}{\mbox e}^{-sb}\rho(s)}{\kappa_{j}(\tilde{\rho}_{b})}
$$
The unconditional distribution of each $H_{i}$ is 
$$
\mathbb{P}(H_{i}\in ds)=(1-{\mbox e}^{-bs})\rho(s))/\psi(b).
$$
The variable $X_{i}$ can be obtained and interpreted as follows:$X_{i}|H_{i}=\lambda$ is a random variable that is equivalent in distribution to a Poisson random variable  with intensity $(b\lambda)$ conditioned on  the event that the Poisson variable is at least $1.$ That is for $j=1,2,\ldots$
$$
\mathbb{P}(X_{i}=j|H_{i}=\lambda)=\frac{b^{j}\lambda^{j}{\mbox e}^{-\lambda b}}{j!(1-{\mbox e}^{-b\lambda})}.
$$
Lastly it is easy to see that $\mathrm{E}[X_{i}]=b\int_{0}^{\infty}s\rho(s)ds/\psi(b),$ which is possibly infinite. 
\begin{rem}For sampling $\tilde{Z}_{M+1}$ replace $\rho(s)$ with ${\mbox e}^{-bMs}\rho(s).$
\end{rem}
\subsubsection{Examples, gamma, stable, and generalized gamma cases: Part 2}
We now look at the special cases of the Gamma and generalized gamma processes considered earlier. 

If for $0<\alpha<1,$ $T$ is a $\alpha$ stable random variable  with density denoted as $f_{\alpha},$ $\psi_{\alpha}(b)=b^{\alpha}$ and $\rho_{\alpha}(s):=\alpha s^{-\alpha-1}/\Gamma(1-\alpha)$ Then  for any $b>0,$
$$
\mathbb{P}(X_{i}=j)=\frac{\alpha \Gamma(j-\alpha)}{j!\Gamma(1-\alpha)},
$$
this is sometimes referred to as Sibuya's distribution, and $H_{i}|X_{i}=j$ is a Gamma$(j-\alpha,1/b)$ random variable.  Furthermore 
$\mathbb{E}[X_{i}]=\infty.$ See Pitman~\cite{Pit97} for more details related to this case.

If $T$ is a generalized gamma random variable with density ${\mbox e}^{-[t\zeta-\zeta^{\alpha}]}f_{\alpha}(t),$ $\rho_{\alpha,\zeta}(s):=\alpha s^{-\alpha-1}{\mbox e}^{-s\zeta}/\Gamma(1-\alpha),$ and $\psi_{\alpha,\zeta}(b)=[(\zeta+b)^{\alpha}-\zeta^{\alpha}]$ then 
$$
\mathbb{P}(X_{i}=j)=\frac{{(b+\zeta)}^{\alpha-j}b^{j}}
{\psi_{\alpha,\zeta}(b)}\frac{\alpha \Gamma(j-\alpha)}{j!\Gamma(1-\alpha)}
$$
and $H_{i}|X_{i}=j$ is Gamma $(j-\alpha, 1/(b+\zeta)).$
$\mathbb{E}[X_{i}]=b\alpha\zeta^{\alpha-1}/\psi_{\alpha,\zeta}(b).$
\begin{rem}
In reference to sampling $\tilde{Z}_{M+1}$ note that 
$$
\rho_{M}(s)=\rho_{\alpha,bM+\zeta}(s)=\alpha s^{-\alpha-1}{\mbox e}^{-s(bM+\zeta)}/\Gamma(1-\alpha).
$$
and one simply replaces $\zeta$ in the above quantities with $\zeta+bM.$ In the stable case, customer $M+1$ 
selects a $\mathrm{Poisson}(b^{\alpha}[{(M+1)}^{\alpha}-M^{\alpha}])$
number of new dishes and draws the corresponding $X_{i}$ from
$$
\mathbb{P}(X_{i}=j)=\frac{{(1+M)}^{\alpha-j}}
{[{(M+1)}^{\alpha}-M^{\alpha}]}\frac{\alpha \Gamma(j-\alpha)}{j!\Gamma(1-\alpha)}.
$$
\end{rem}
Suppose that $T$ is now a gamma random variable with shape parameter $\theta$ and scale $1.$ Then $\rho(s)=\theta s^{-1}{\mbox e}^{-s},$ $\psi(b)=\theta \log(1+b)$ and 
$$
\mathbb{P}(X_{i}=j)=\frac{b^{j}{(1+b)}^{-j}}{j\log(1+b)}
$$
and $H_{i}|X_{i}=j$ is Gamma $(j, 1/(1+b)).$ It is curious that the random variables do not depend on $\theta.$ $\mathbb{E}[X_{i}]=b/[\log(1+b)].$ Note in this case the $X_{i}$ are iid with a discrete Logarithmic distribution with parameter $b/(1+b)$ and it follows that
$$
Z_{1}(\Omega)\overset{d}=\sum_{i=1}^{\xi(\theta \log(1+b))}X_{i}\sim \mathrm{NB}(\theta,b/(1+b)),
$$
and $\mathbb{E}[Z_{1}(\Omega)]=\theta b.$
Note again that the gamma case is the case considered by Titsias.
\begin{rem}In order to sample $\tilde{Z}_{M+1}$ in the gamma case, note that $\rho_{M}(s)=\theta
s^{-1}{\mbox e}^{-(bM+1)s}.$
Hence $\psi(b)=\theta\log(1+b/(bM+1))$ and 
$$
\mathbb{P}(X_{i}=j)=\frac{b^{j}{(1+b(M+1))}^{-j}}{j\log(1+b/(bM+1))}
$$
which is a discrete Logarithmic distribution with parameter $b/(1+b(M+1)).$ Hence it follows that $\tilde{Z}_{M+1}(\Omega)$ has an 
$\mathrm{NB}(\theta,b/(1+b(M+1
+)))$ distribution.
\end{rem}

\subsection{$\mathrm{IBP}(\mathrm{NB}(r,p), \rho)$ processes for generating Z in the Negative Binomial case}
We now look at the Negative Binomial setting. For this class of models the relevant $\tilde{Q_{A}}$ measure is a proper distribution determined by the joint measure, 
$$
\indic_{\{a\neq 0\}}{a+r-1\choose a}p^{a}{(1-p)}^{r}\rho(p).
$$
The probability that a Negative Binomial random variable is greater than $0$ is $[1-(1-p)^{r}]$ and we see that the relevant total mass is 
$$
\varphi=\phi(r)=\int_{0}^{1}[1-(1-p)^{r}]\rho(p)dp=r\int_{0}^{1}p{(1-p)}^{r-1}\rho(p)dp.
$$

We now say $((H_{i},X_{i}))$ follows a $\mathrm{IBP}(\mathrm{NB}(r,p), \rho)$ process if $X_{i}|H_{i}=p$ has distribution, for $j=1,2,\ldots,$ 
\begin{equation}
\label{NBcond}
\mathbb{P}(X_{i}=j|H_{i}=p)=\frac{{j+r-1\choose j}p^{j}{(1-p)}^{r}}{[1-{(1-p)}^{r}]}
\end{equation}
and $H_{i}$ has marginal density
\begin{equation}
\label{NBHmarginal}
\frac{{[1-{(1-p)}^{r}}]\rho(p)}{\phi(r)}.
\end{equation}
Notice that the distribution of $X_{i}|H_{i}=p$ corresponds to the  distribution of a Negative Binomial random variable conditioned so it takes values greater than $0.$ Notice also that here, as in the Poisson case (and indeed any $A$),  this distribution does not depend on $\rho.$ Obviously the marginal distribution of $X_{i}$ is 
\begin{equation}
\label{NBdist}
\mathbb{P}(X_{i}=j)=\frac{{j+r-1\choose j}\int_{0}^{1}p^{j}{(1-p)}^{r}\rho(p)dp}{\phi(r)}=\frac{{j+r-1\choose j}\kappa_{j,r}(\rho)}{\phi(r)}
\end{equation}
and $H_{i}|X_{i}=j$ is
\begin{equation}
\label{NBHconddist}
\mathbb{P}(H_{i}\in dp|X_{i}=j)=\frac{p^{j}{(1-p)}^{r}\rho(p)dp}{\kappa_{j,r}(\rho)}.
\end{equation} 
One can check that $\mbox{E}[X_{i}]=r\int_{0}^{1}p{(1-p)}^{-1}\rho(p)dp/\phi(r),$ which is certainly not always finite.
\begin{rem}For sampling $\tilde{Z}_{M+1}$ replace $\rho(s)$ with ${(1-s)}^{Mr}\rho(s).$
\end{rem}
\subsection{Surprise, some beta process priors are explosive in the Negative Binomial setting} As was mentioned much of the work done so far with the Negative Binomial case has involved the use of the beta process. It is quite straightforward to obtain explicit expressions based on our results so such calculations  are in fact omitted. However we have noticed a surprising fact in this case which we find is worthwhile to mention. Consider a simple beta process with intensity
$$
\rho_{\theta,\beta}(p)=\theta p^{-1}{(1-p)}^{\beta-1}.
$$
which is well defined for any $\beta>0.$
Then applying (\ref{NBdist}) it is easy to see that the marginal distribution of $X_{i}$ would correspond to the result given in~\cite{Croy}. However what seems not to have been pointed out is the following. Note that if $Z\sim IBP(
\mathrm{NB}(r,p), \rho B_{0})$ then 
$$
\mathbb{E}[Z(\Omega)]=\varphi E[X_{1}]=r\int_{0}^{1}p{(1-p)}^{-1}\rho(p)dp.
$$
If we use $\rho_{\theta,\beta}(p)$ then for $Z\sim IBP(
\mathrm{NB}(r,p), \rho_{\theta,\beta} B_{0}),$
$$
\mathbb{E}[Z(\Omega)]=\varphi E[X_{1}]=\theta r\int_{0}^{1}{(1-p)}^{\beta-2}dp.
$$
So it follows that if $\beta\leq 1$ then $\mathbb{E}[Z(\Omega)]=\infty.$ From section 4 note this still means that a customer selects a Poisson $(\varphi),$ $\varphi=\phi(r),$ number of dishes, but in these cases gives very high scores to each dish, which is reflected by $\mathbb{E}[X_{1}]=\infty.$
\section{Constrained Multivariate Priors and IBP generalizations}
The examples of Poisson, Bernoulli and Negative Binomial processes we presented in the previous section have all appeared in the recent literature, albeit with respect to more confined choices for $\mu.$ There is certainly interest in possible multivariate extensions of such processes. The purpose of this section is to demonstrate that from a technical viewpoint the $\mathrm{PPC}$ is well-equipped to easily handle this. This importantly would allow one to focus on modeling issues and interpretations that arise in a more intricate multidimensional setting rather than be encumbered by the calculus of random measures. We present results for a simple multinomial setting then a very general multivariate setting. The results although stated in a brief form are obtained in a formal manner as the PPC makes such arguments rather transparent. The joint distributions are very similar to the univariate cases. 

We discuss the case of multivariate CRM's which have suitable constraints to handle for instance multinomial extensions of the IBP, and provide an example of an IBP-like model. 
Lets first say a few words about multivariate Levy processes with positive jumps. For $\mathbf{x}_{q}=(x_{1},x_{2},\ldots, x_{q}),$ let $\rho_{q}(\mathbf{x})$ denote a sigma-finite function concentrated on $\mathbb{R}^{q}_{+}/\{\mathbf{0}\}$ then following Barndorff-Nielsen, Pedersen and Sato~\citep[Proposition 3.1]{BNPS} there exists a multivariate CRM $\mu_{0}:=(\mu_{j},j=1,\dots,q)$ with jumps specified by $\rho_{q}$ if 
$\int_{\mathbb{R}^{q}_{+}}\min(x_{\cdot},1)\rho_{q}(\mathbf{x})d\mathbf{x}<\infty,$ where $x_{\cdot}=\sum_{j=1}^{q}x_{j}.$ In the next section we  shall focus on models with the following constraints 
$$\mathcal{S}_{q}={\{\mathbf{s}_{q}:s_{j}>0,s_{\cdot}=\sum_{j=1}^{q}s_{j}<1\}}.$$ 

However in the final section we will not specify this. Naturally we shall assume a common base measure $B_{0}(d\omega)$ representing again features. So it follows that one can write $\mu_{j}=\sum_{k=1}^{\infty}p_{j,k}\delta_{\tilde{\omega}_{k}}.$ Furthermore 
$\mu_{\cdot}=\sum_{j=1}^{q}\mu_{j}=\sum_{k=1}^{\infty}p_{\cdot,k}\delta_{\tilde{\omega}_{k}},$ for  $p_{\cdot,k}=\sum_{j=1}^{q}p_{j,k}.$ The multivariate CRM $\mu_{0}$ is constructed from  $N$ a Poisson random measure on $(\mathbb{R}^{q}_{+},\Omega)$
with intensity $\nu(d\mathbf{s}_{q},d\omega)=\rho_
{q}(d\mathbf{s}_{q}|\omega)B_{0}(d\omega),$ and can be represented as $N:=\sum_{k=1}^{\infty}\delta_{\mathbf{p}_{q,k},\tilde{\omega}_{k}},$ where $\mathbf{p}_{q,k}=(p_{1,k},\ldots,p_{q,k}).$ 

\begin{rem}
For example if $\rho_{2}(x_{1},x_{2})=\theta{(x_{1}+x_{2})}^{-2}\indic_{\{0<x_{1}+x_{2}<1\}}$ then it follows making the transformation $s=x_{1}+x_{2}$ yields the marginal Levy density for $\mu_{\cdot}$ to be $\theta s^{-1}\indic_{\{0<s<1\}}.$ See\cite{KimY2} and section~\ref{bD} below for a more general class of models.
\end{rem}
\subsection{Simple Multinomial case}\label{Multi}
Now conditional on $\mu_{0},$ for each fixed $k$ and $i$ let $b^{(i)}_{0,k}:=(b^{(i)}_{j,k}, j=1,\ldots, q)$
denote an independent Multinomial $(1,(p_{j,k}), 1-p_{.,k}):=\mathsf{M}(1,\mathbf{p}_{q,k})$ vector. That is for each fixed $(i,k)$ the joint probability mass function is given by,
$$
\left[\prod_{j=1}^{q}p^{b^{(i)}_
{j,k}}_{j,k}\right](1-p_{\cdot,k})^{1-b^{(i)}_{\cdot,k}}
$$
where at most one of the terms in $b^{(i)}_{0,k}:=(b^{(i)}_{j,k}, j=1,\ldots, q)$ is one and the others are $0.$ 
Then we can define a vector valued process ${Z}^{(i)}_{0}:=(Z^{(i)}_{1},\ldots,Z^{(i)}_{q}),$ where $Z^{(i)}_{j}=\sum_{k=1}^{\infty}b^{(i)}_{j,k}\delta_{\tilde{\omega}_{k}},$ and $Z^{(i)}_{\cdot}=\sum_{j=1}^{q}Z^{(i)}_{j}$ is an IBP Bernoulli process. We say that ${Z}^{(1)}_{0},\ldots,{Z}^{(M)}_{0}|\mu_{0}$
are iid $\mathrm{IBP}(\mathsf{M}(1,\mathbf{p}_{q})|\mu_{0}),$ and marginally ${Z}^{(1)}_{0}$ is $\mathrm{IBP}(\mathsf{M}(1,\mathbf{p}_{q}),\rho_{q}B_{0}).$
 Now based on $M$ conditionally iid processes the likelihood can be written as
$$
\prod_{i=1}^{M}\prod_{k=1}^{\infty}\left[\prod_{j=1}^{q}p^{b^{(i)}_
{j,k}}_{j,k}\right](1-p_{\cdot,k})^{1-b^{(i)}_{\cdot,k}}=
\left[\prod_{l=1}^{K}\prod_{j=1}^{q}{\left(\frac{p_{j,\ell}}{1-p_{\cdot,\ell}}\right)}^{c_{j,\ell,M}}\right]{\mbox e}^{-M\sum_{j=1}^{\infty}[-\log(1-p_{\cdot,j})]}
$$
where $c_{j,\ell,M}=\sum_{i=1}^{M}b^{(i)}_{j,\ell}$ and $c_{\ell,M}=\sum_{j=1}^{q}c_{j,\ell,M}.$
Now let $N$ denote a Poisson random measure on $(\mathbb{R}^{q}_{+},\Omega)$
with intensity $\nu(d\mathbf{s}_{q},d\omega)=\rho_
{q}(d\mathbf{s}_{q}|\omega)B_{0}(d\omega).$ Now define $g_{j}(\mathbf{s}_{q})=s_{j}/(1-s_{\cdot})$
for $j=1,\ldots q.$ Despite the added complexity the likelihood is similar in form to the simple Bernoulli process case and one can immediately conclude that a joint distribution of  $N,(\mathbf{J}_{q,\ell}\in \mathcal{S}_{q}, \ell=1,\ldots,K),(Z^{(1)}_{0},\ldots, Z^{(M)}_{0})$ is given by, 
$$
\mathcal{P}(dN|\nu_{f},(\mathbf{s}_{q},\omega))\left[\prod_{\ell=1}^{K}\mathbb{P}(\mathbf{J}_{q,\ell}\in d\mathbf{s}_{q,\ell})\right]\mathbb{P}(Z^{(1)}_{0},\ldots, Z^{(M)}_{0})
$$
where now $f(\mathbf{s}_{q},\omega)=-M\log(1-s_{\cdot})$ and hence 
\begin{equation}
\nu_{f}(d\mathbf{s}_{q},d\omega)={(1-s_{\cdot})}^{M}\nu(d\mathbf{s}_{q},d\omega)=
{(1-s_{\cdot})}^{M}
\rho_
{q}(d\mathbf{s}_{q}|\omega)B_{0}(d\omega),
\label{multinu}
\end{equation}
and the joint distribution of $(\mathbf{J}_{q,\ell}\in \mathcal{S}_{q},\ell=1,\ldots,K),(Z^{(1)}_{0},\ldots, Z^{(M)}_{0})$ is given by, 
$$
{\mbox e}^{-\phi(M)}\prod_{\ell=1}^{K}\left[\prod_{j=1}^{q}{[g_{j}(\mathbf{s}_{q,\ell})]}^{c
_{j,\ell,M}}\right]{(1-s_{\cdot,\ell})}^{M}\rho_{q}(d\mathbf{s}_{q,\ell}|\omega_{\ell})B_{0}(d\omega_{\ell}),
$$
where $\phi(M)=\int_{\Omega}\int_{R^{q}_{+}}(1-[1-s_{\cdot}]^{M})\rho_{q}(d\mathbf{s}_{q}|\omega)B_{0}(d\omega).$
Again applying Bayes rule, this is enough to conduct a posterior analysis in parallel to the univariate case.
Set 
$$
\kappa_{\mathbf{c}_{\ell,M}}(\rho_{q,M}|\omega_{\ell})=
\int_{\mathbf{S}_{q}}\left[\prod_{j=1}^{q}{p_{j}}^{c
_{j,\ell,M}}\right]{(1-p_{\cdot})}^{M-c_{\ell,M}}\rho_{q}(d\mathbf{p}_{q}|\omega_{\ell})
$$
where $c_{\ell,M}=\sum_{j=1}^{q}c_{j,\ell.M},$ $\mathbf{c}_{\ell,M}=(c_{1.\ell,M},\ldots,c_{q,\ell,M}).$

\begin{prop}\label{multiprop}Suppose that ${Z}^{(1)}_{0},\ldots,{Z}^{(M)}_{0}|\mu_{0}$
are iid $\mathrm{IBP}(\mathsf{M}(1,\mathbf{p}_{q})|\mu_{0}),$  as described above in section~\ref{Multi}. Then,
\begin{enumerate}
\item[(1)]the posterior distribution of $N|{Z}^{(1)}_{0},\ldots,{Z}^{(M)}_{0},$ is equivalent in distribution to the random measure
$$
N_{M}+\sum_{\ell=1}^{K}\delta_{\mathbf{J}_{q,\ell},
\omega_{\ell}}$$
where $N_{M}$ is a $\mathrm{PRM}(\rho_{q,M}B_{0})$ with mean intensity described in 
(\ref{multinu}) and conditionally independent of $N_{M},$ 
the $(\mathbf{J}_{q,\ell})$ are an independent collection of random vectors such that $\mathbf{J}_{q,\ell}=(J_
{1,\ell},\ldots, J_{q,\ell})$ has joint density,
$$
\frac{\left[\prod_{j=1}^{q}{[s_{j,\ell}]}^{c
_{j,\ell,M}}\right]{(1-s_{\cdot,\ell})}^{M-c_{\ell,M}}\rho_{q}(d\mathbf{s}_{q,\ell}|\omega_{\ell})}{\kappa_{\mathbf{c}_{\ell,M}}(\rho_{q,M}|\omega_{\ell})},
$$
where $c_{\ell,M}=\sum_{j=1}^{q}c_{j,\ell.M},$ $\mathbf{c}_{\ell,M}=(c_{1.\ell,M},\ldots,c_{q,\ell,M}).$
\item[(2)]It follows that ${Z}^{(M+1)}_{0}|{Z}^{(1)}_{0},\ldots,{Z}^{(M)}_{0}$  can be represented in terms of   ${\tilde{Z}}^{(M+1)}_{0}:=(\tilde{Z}^{(M+1)}_{1},\ldots,\tilde{Z}^{(M+1)}_{q}),$ which is determined by $N_{M}$ call it an $\mathrm{IBP}(\mathsf{M}(1,\mathbf{p}_{q}),\rho_{q,M}B_{0})$ vector, and $\ell=1,\ldots, K$  vectors $(b^{(\ell)}_{0}), $ where for each fixed $\ell,$ $b^{(\ell)}_{0}|\mathbf{J}_{q,\ell}=\mathbf{s}_{q}$ has distribution $\mathsf{M}(1,\mathbf{s}_{q}).$ Specifically, component-wise $\tilde{Z}^{(M+1)}_{0}$
can be represented in distribution as $$\tilde{Z}^{(M+1)}_{j}+\sum_{\ell=1}^{K}b^{(\ell)}_{j}\delta_{{\omega}_{\ell}},$$ where $b^{(\ell)}_{j}$ is the j-th component of $b^{(\ell)}_{0}.$
\item[(3)]Marginally for each $\ell,$ $b^{(\ell)}_{0}$ is $\mathsf{M}(1,\mathbf{r}_{q,\ell}),$ where $\mathrm{r}_{q,\ell}=(r_{1,\ell},\ldots,r_{q,\ell})$ defined for $k=1,\ldots,q,$ as
$$
r_{k,\ell}=\mathbb{E}[J_{k,\ell}|\mathbf{c}_{\ell,M}]=
\frac{\int_{\mathbf{S}_q}s_{k}\left[\prod_{j=1}^{q}{[s_{j}]}^{c
_{j,\ell,M}}\right]{(1-s_{\cdot})}^{M-c_{\ell,M}}\rho_{q}(d\mathbf{s}_{q}|\omega_{\ell})}{\kappa_{\mathbf{c}_{\ell,M}}(\rho_{q,M}|\omega_{\ell})}.
$$
\item[(4)]Hence $b^{(\ell)}_{\cdot}=\sum_{j=1}^{q}b^{(\ell)}_{j}$ is $\mathrm{Bernoulli}(\sum_{j=1}^{q}r_{j,\ell}).$ Given $b^{(\ell)}_{\cdot}=1,$ $b^{(\ell)}_{0}$ becomes a simple mutlinomial distribution with joint probability mass function
$$
\indic_{\{b^{(\ell)}_{0}=1\}}\prod_{j=1}^{q}u^{b^{(\ell)}_{j}}_{j,\ell}
$$
where $u_{j,\ell}=r_{j,\ell}/\sum_{k=1}^{q}r_{k,\ell}.$
\end{enumerate}
\end{prop}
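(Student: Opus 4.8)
The plan is to run the same three-move template that established Theorem~\ref{genthm} and Proposition~\ref{Zproposition}, now over the Polish space $(\mathbb{R}^{q}_{+},\Omega)$ with jumps concentrated on $\mathcal{S}_{q}$. First I would take the likelihood of the $M$ conditionally iid multinomial processes, already exhibited above in the factored form
$$
\left[\prod_{\ell=1}^{K}\prod_{j=1}^{q}{\left(\frac{p_{j,\ell}}{1-p_{\cdot,\ell}}\right)}^{c_{j,\ell,M}}\right]{\mbox e}^{-M\sum_{j}[-\log(1-p_{\cdot,j})]},
$$
and read off the exponential tilt $f(\mathbf{s}_{q},\omega)=-M\log(1-s_{\cdot})$, so that ${\mbox e}^{-f(\mathbf{s}_{q},\omega)}=(1-s_{\cdot})^{M}$ while $g_{j}(\mathbf{s}_{q})=s_{j}/(1-s_{\cdot})$ plays the role of the $h_{i,\ell}$ of~(\ref{hfunction}). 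Augmenting by the unique jumps $(\mathbf{J}_{q,\ell})$ one then writes the likelihood against $\mathcal{P}(dN|\nu)\prod_{\ell}N(d\mathbf{s}_{q,\ell},d\omega_{\ell})$ exactly in the shape of~(\ref{Alikelihood}).

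Second, I apply Proposition~2.1 of James~\cite{James2005} (equation~(\ref{prop1})) to tilt $N$ from $\mathrm{PRM}(\nu)$ to $\mathrm{PRM}(\nu_{f})$ with $\nu_{f}$ given by~(\ref{multinu}), releasing the factor ${\mbox e}^{-\phi(M)}$ with $\phi(M)=\int_{\Omega}\int_{\mathbb{R}^{q}_{+}}(1-[1-s_{\cdot}]^{M})\rho_{q}(d\mathbf{s}_{q}|\omega)B_{0}(d\omega)$, and then Proposition~2.2 (equation~(\ref{prop2})) to disintegrate along the $K$ uniquely chosen atoms. By~(\ref{Nsplit}) this leaves the residual $N_{M}\sim\mathrm{PRM}(\rho_{q,M}B_{0})$ plus the point masses $\delta_{\mathbf{J}_{q,\ell},\omega_{\ell}}$, and attaches the marginal factor $\prod_{\ell}{\mbox e}^{-f(\mathbf{s}_{q,\ell},\omega_{\ell})}\rho_{q}(d\mathbf{s}_{q,\ell}|\omega_{\ell})B_{0}(d\omega_{\ell})=\prod_{\ell}(1-s_{\cdot,\ell})^{M}\rho_{q}(d\mathbf{s}_{q,\ell}|\omega_{\ell})B_{0}(d\omega_{\ell})$. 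Combining with the $g_{j}$ powers recovers the displayed joint law of $(N,(\mathbf{J}_{q,\ell}),(Z^{(i)}_{0}))$; integrating $N$ out yields a finite-dimensional joint law, and Bayes' rule then gives part~(1): the conditional density of $\mathbf{J}_{q,\ell}$ is proportional to $\big[\prod_{j}s_{j,\ell}^{c_{j,\ell,M}}\big](1-s_{\cdot,\ell})^{M-c_{\ell,M}}\rho_{q}(d\mathbf{s}_{q,\ell}|\omega_{\ell})$ with normalizer $\kappa_{\mathbf{c}_{\ell,M}}(\rho_{q,M}|\omega_{\ell})$, and $N|Z^{(1)}_{0},\ldots,Z^{(M)}_{0}$ is $N_{M}+\sum_{\ell}\delta_{\mathbf{J}_{q,\ell},\omega_{\ell}}$.

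For part~(2) I would read off from part~(1) the posterior law of $\mu_{0}$ as $\mu_{0,M}+\sum_{\ell}\mathbf{J}_{q,\ell}\delta_{\omega_{\ell}}$ (componentwise), exactly as~(\ref{genMu}); since $Z^{(M+1)}_{0}|\mu_{0}$ assigns to the atom $\omega_{\ell}$ an independent $\mathsf{M}(1,\mathbf{p}_{q,\ell})$ vector with $\mathbf{p}_{q,\ell}$ the posterior jump $\mathbf{J}_{q,\ell}$, conditioning gives $b^{(\ell)}_{0}|\mathbf{J}_{q,\ell}=\mathbf{s}_{q}\sim\mathsf{M}(1,\mathbf{s}_{q})$, while the contribution of $\mu_{0,M}$ supplies the fresh $\mathrm{IBP}(\mathsf{M}(1,\mathbf{p}_{q}),\rho_{q,M}B_{0})$ vector $\tilde{Z}^{(M+1)}_{0}$. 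Parts~(3) and~(4) are then elementary: since $\mathbb{E}[b^{(\ell)}_{j}|\mathbf{J}_{q,\ell}=\mathbf{s}_{q}]=s_{j}$, marginalizing over the posterior law of $\mathbf{J}_{q,\ell}$ gives $b^{(\ell)}_{0}\sim\mathsf{M}(1,\mathbf{r}_{q,\ell})$ with $r_{k,\ell}=\mathbb{E}[J_{k,\ell}|\mathbf{c}_{\ell,M}]$; summing coordinates shows $b^{(\ell)}_{\cdot}\sim\mathrm{Bernoulli}(\sum_{j}r_{j,\ell})$, and conditioning a single-trial multinomial on a nonzero outcome renormalizes the cell probabilities to $u_{j,\ell}=r_{j,\ell}/\sum_{k}r_{k,\ell}$.

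The computations are all routine once the template is set up; the one point that genuinely needs checking is that Propositions~2.1 and~2.2 of~\cite{James2005} apply verbatim with $N$ a Poisson random measure on $\mathbb{R}^{q}_{+}$ (with jumps supported on $\mathcal{S}_{q}$) times $\Omega$ — which requires $\int_{\mathbb{R}^{q}_{+}}\min(s_{\cdot},1)\rho_{q}(d\mathbf{s}_{q}|\omega)<\infty$ so that $\mu_{0}$ is a well-defined multivariate CRM in the sense of Barndorff-Nielsen, Pedersen and Sato~\cite{BNPS}, together with finiteness of $\phi(M)$ and of each $\kappa_{\mathbf{c}_{\ell,M}}(\rho_{q,M}|\omega_{\ell})$, which is what makes the tilted intensity and the conditional jump laws proper. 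Granted these, the multivariate disintegration has the same form as the univariate one and no new analytic difficulty arises.
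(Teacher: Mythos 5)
Your proposal is correct and follows essentially the same route as the paper: identify $f(\mathbf{s}_{q},\omega)=-M\log(1-s_{\cdot})$ from the multinomial likelihood, apply the exponential change of measure and the moment-measure disintegration of \cite{James2005} to obtain the joint law of $(N,(\mathbf{J}_{q,\ell}),(Z^{(i)}_{0}))$, then use Bayes' rule for part (1) and the linearity of the $\mathsf{M}(1,\cdot)$ mass function in its cell probabilities for parts (2)--(4). The only addition is your explicit check of the integrability conditions needed for the multivariate Poisson calculus, which the paper takes for granted.
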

\begin{rem} We will discuss more details for the generative scheme following the next section which describes a general multivariate setting.
\end{rem}
\subsection{A general multivariate process}
Let now $A_{0}:=(A_{1},\ldots,A_{\upsilon}),$ for $\upsilon$ a positive integer not necessarily equal to $q,$  denote a random vector taking values in $\mathcal{A}\subseteq \mathbb{R}^{\upsilon},$ with conditional distribution $G_{A_{0}}(\cdot|\mathbf{s}_{q}),$ where $\mathbf{s}_{q}\in \mathbb{R}^{q}_{+},$ and such that $1-\pi_{A_{o}}(\mathbf{s}_{q})=\mathbb{P}(A_{1}=0,\ldots, A_{\upsilon}=0|\mathbf{s}_{q})>0.$
Then we can define a vector valued process ${Z}^{(i)}_{0}:=(Z^{(i)}_{1},\ldots,Z^{(i)}_{\upsilon}),$ where $Z^{(i)}_{j}=\sum_{k=1}^{\infty}A^{(i)}_{j,k}\delta_{\tilde{\omega}_{k}},$ and $Z^{(i)}_{\cdot}=\sum_{j=1}^{\upsilon}Z^{(i)}_{j}=\sum_{k=1}^{\infty}A^{(i)}_{\cdot,k}\delta_{\tilde{\omega}_{k}},$ where conditional on $\mu_{0}:=(\mu_{1},\ldots \mu_{\upsilon}),$ for each fixed $(i,k),$
$A^{(i)}_{0,k}:=(A^{(i)}_{1,k},\ldots, A^{(i)}_{\upsilon,k})$ is independent $G_{A_{0}}(\cdot|\mathbf{s}_{q,k}),$ where 
$\mathbf{s}_{q,k}=(s_{1,k},\ldots,s_{q,k})$ are vector valued points of a PRM with intensity $\rho_{q}(\cdot|\omega).$ We say that ${Z}^{(1)}_{0},\ldots,{Z}^{(M)}_{0}|\mu_{0}$ are iid $\mathrm{IBP}(G_{A_{0}}|\mu_{0}).$ It follows that, denoting the argument for $A^{(i)}_{0,j}$ as $\mathbf{a}^{(i)}_{0,j},$ the likelihood is given by, 

$$
{\mbox e}^{-\sum_{j=1}^{\infty}[-M\log(1-\pi_{A_{0}}(\mathbf{s}_{q,j}))]}\prod_{i=1}^{M}\prod_{j=1}^{\infty}{\left[\frac{G_{A_{0}}(d\mathbf{a}^{(i)}_{0,j}|\mathbf{s}_{q,j})}{
1-\pi_{A_{0}}(\mathbf{s}_{q,j})}\right]}^{\indic_{\{\mathbf{a}^{(i)}_{0,j}\notin {\mathbf{0}}\}}}
$$ 
Despite the extension to the multivariate case, it is evident that the form above looks quite similar to the univariate case and one may conclude that the relevant joint distribution of $N,(\mathbf{J}_{q,\ell}),(Z^{(1)}_{0},\ldots,Z^{(M)}_{0})$ is given by 
\begin{equation} 
\label{mainjoint3multi2}
\mathcal{P}(dN|\nu_{f_{M}},\mathbf{s,\omega}){\mbox e}^{-\Psi(f_{M})}
\prod_{\ell=1}^{K}
{[1-\pi_{A_{0}}(\mathbf{s}_{q,\ell})]}^{M}
\rho_{q}(d\mathbf{s}_{q,\ell}|\omega)B_{0}(d\omega_{\ell})
\prod_{i=1}^{M}h_{i,\ell}(\mathbf{s}_{q,\ell})
\end{equation}
where now $f_{M}(\mathbf{s}_{q},\omega)=-M\log[1-\pi_{A_{0}}(\mathbf{s}_{q})]$
and hence
$$
\nu_{f_{M}}(d\mathbf{s}_{q},\omega)={[1-\pi_{A_{0}}(\mathbf{s}_{q})]}^{M}
\rho_{q}(d\mathbf{s}_{q}|\omega)B_{0}(d\omega):=\rho_{q,M}(d\mathbf{s}_{q}|\omega)B_{0}(d\omega),$$
and similar to the univariate case, 
\begin{equation}
\label{hfunction2}
h_{i,\ell}(\mathbf{s}_{q})={\left[\frac{G_{A_{0}}(d\mathbf{a}^{(i)}_{0,\ell}|\mathbf{s}_{q})}{
1-\pi_{A_{0}}(\mathbf{s}_{q})}\right]}^{\indic_{\{\mathbf{a}^{(i)}_{0,\ell}\notin {\mathbf{0}}\}}},
\end{equation}
and
\begin{equation}
\Psi(f_{M})=\int_{\Omega}\int_{\mathbb{R}^{q}_{+}}(1-{[1-\pi_{A_{0}}(\mathbf{s}_{q})]}^{M})\rho_{q}(d\mathbf{s}_{q}|\omega)B_{0}(d\omega).
\label{psifunctionM2}
\end{equation}

\begin{prop} Suppose that ${Z}^{(1)}_{0},\ldots,{Z}^{(M)}_{0}|\mu_{0}$ are iid $\mathrm{IBP}(G_{A_{0}}|\mu_{0}),$ where $\mu_{0}$ is a multivariate $\mathrm{CRM}(\rho_{q}B_{0}).$ For each $A_{0},$ and $M$ set
$$
\rho_{q,M}(\mathbf{s}_{q}|\omega)={[1-\pi_{A_{0}}(\mathbf{s}_{q})]}^{M}\rho_{q}(\mathbf{s}_{q}|\omega).
$$
\begin{enumerate}
\item[(1)]
Then it follows that the posterior distribution of $N|(Z^{(1)}_{0},\ldots,Z^{(M)}_{0})$ is equivalent to the distribution of $N_{M}+\sum_{\ell=1}^{K}\delta_{\mathbf{J}_{q,\ell},\omega_{\ell}},$ where $N_{M}$ is a $\mathrm{PRM}(\rho_{q,M}B_{0})$ and the vector 
$\mathbf{J}_{q,\ell}=(J_{1,\ell},\ldots, J_{q,\ell})$ has joint density, with argument $\mathbf{s}_{q,\ell},$ given proportional to 
$$
{[1-\pi_{A_{0}}(\mathbf{s}_{q,\ell})]}^{M}
\rho_{q}(d\mathbf{s}_{q,\ell}|\omega)
\prod_{i=1}^{M}h_{i,\ell}(\mathbf{s}_{q,\ell}).$$
\item[(2)]Let $\mu_{0,M}=(\mu_{1,M},\ldots,\mu_{q,M})$ denote a multivariate $\mathrm{CRM}(\rho_{q,M}B_{0}), $ then the posterior distribution of $\mu_{0}|{Z}^{(1)}_{0},\ldots,{Z}^{(M)}_{0},$ is equivalent to that of a multivariate process whose $j$-th component is equivalent in distribution to 
$$
\mu_{j,M}+\sum_{\ell=1}^{K}J_{j,\ell}\delta_{\omega_{\ell}}.
$$
\item[(3)] The corresponding distribution of $ Z^{M+1}_{0}|(Z^{(1)}_{0},\ldots,Z^{(M)}_{0})$ can be represented in terms of   ${\tilde{Z}}^{(M+1)}_{0}:=(\tilde{Z}^{(M+1)}_{1},\ldots,\tilde{Z}^{(M+1)}_{\upsilon}),$ which is determined by $N_{M}$ call it an $\mathrm{IBP}(A
_{0},\rho_{q.M}B_{0})$ vector, and $\ell=1,\ldots, K$  vectors $(A^{(\ell)}_{0}), $ where for each fixed $\ell,$ $A^{(\ell)}_{0}|\mathbf{J}_{q,\ell}=\mathbf{s}_{q}$ has distribution $G_{A_{0}}(\cdot|\mathbf{s}_{q}).$ In other words component-wise $Z^{(M+1)}_{0}$
can be represented in distribution as $\tilde{Z}^{(M+1)}_{j}+\sum_{\ell=1}^{K}A^{(\ell)}_{j}\delta_{{\omega}_{\ell}},$ where $A^{(\ell)}_{j}$ is the j-th component of $A^{(\ell)}_{0}.$
\end{enumerate}
\end{prop}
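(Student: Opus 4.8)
The plan is to run, mutatis mutandis, exactly the sequence of Poisson calculus operations that produced Theorem~\ref{genthm} and Proposition~\ref{Zproposition}, now carrying the vector jump $\mathbf{s}_q$ in place of the scalar $s$ and $\pi_{A_0}$ in place of $\pi_A$. First I would take the likelihood displayed immediately above the proposition and, as in the passage leading to (\ref{mainjoint3multi2}), split it into the ``exponential'' piece $\mathrm{e}^{-\sum_{j}[-M\log(1-\pi_{A_0}(\mathbf{s}_{q,j}))]}=\mathrm{e}^{-N(f_M)}$ with $f_M(\mathbf{s}_q,\omega)=-M\log[1-\pi_{A_0}(\mathbf{s}_q)]$, times the product form $\prod_i\prod_j h_{i,j}(\mathbf{s}_{q,j})$ with $h_{i,\ell}$ as in (\ref{hfunction2}). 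Augmenting $\mu_0$ by its underlying $\mathrm{PRM}$ $N=\sum_k\delta_{\mathbf{p}_{q,k},\tilde\omega_k}$ and isolating the $K$ unique atoms $(\mathbf{J}_{q,\ell},\omega_\ell)$ that carry at least one nonzero $\mathbf{a}^{(i)}_{0,\ell}$, the joint law of $((Z^{(i)}_0),(\mathbf{J}_{q,\ell}),N)$ becomes $\mathrm{e}^{-N(f_M)}\mathcal{P}(dN|\nu)\prod_\ell N(d\mathbf{s}_{q,\ell},d\omega_\ell)\prod_i h_{i,\ell}(\mathbf{s}_{q,\ell})$, with $\nu(d\mathbf{s}_q,d\omega)=\rho_q(d\mathbf{s}_q|\omega)B_0(d\omega)$.

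Next I would apply the Laplace functional exponential change of measure (\ref{prop1}), which replaces $\mathrm{e}^{-N(f_M)}\mathcal{P}(dN|\nu)$ by $\mathcal{P}(dN|\nu_{f_M})\mathrm{e}^{-\Psi(f_M)}$ with $\nu_{f_M}(d\mathbf{s}_q,d\omega)=[1-\pi_{A_0}(\mathbf{s}_q)]^M\rho_q(d\mathbf{s}_q|\omega)B_0(d\omega)=\rho_{q,M}(d\mathbf{s}_q|\omega)B_0(d\omega)$ and $\Psi(f_M)$ as in (\ref{psifunctionM2}), then apply the moment-measure disintegration (\ref{prop2}) to the $K$ uniquely picked points, arriving at (\ref{mainjoint3multi2}). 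By (\ref{Nsplit}) the conditional law $\mathcal{P}(dN|\nu_{f_M},\mathbf{s},\omega)$ is exactly that of $N_M+\sum_{\ell=1}^K\delta_{\mathbf{J}_{q,\ell},\omega_\ell}$ with $N_M\sim\mathrm{PRM}(\rho_{q,M}B_0)$ independent of the atoms; integrating $N$ out of (\ref{mainjoint3multi2}) leaves the finite-dimensional joint law of $((Z^{(i)}_0),(\mathbf{J}_{q,\ell}))$, and Bayes' rule then gives each $\mathbf{J}_{q,\ell}$ the conditional density proportional to $[1-\pi_{A_0}(\mathbf{s}_{q,\ell})]^M\rho_q(d\mathbf{s}_{q,\ell}|\omega)\prod_i h_{i,\ell}(\mathbf{s}_{q,\ell})$. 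Combined with the description of $N_M$ this is part (1). For part (2) I would push this posterior representation of $N$ through the componentwise linear functional $\mu_j(d\omega)=\int_{\mathbb{R}^q_+}s_j\,N(d\mathbf{s}_q,d\omega)$: the $N_M$ summand yields the multivariate $\mathrm{CRM}(\rho_{q,M}B_0)$ labelled $\mu_{0,M}$, while each atom $\delta_{\mathbf{J}_{q,\ell},\omega_\ell}$ contributes $J_{j,\ell}\delta_{\omega_\ell}$ to the $j$-th coordinate. For part (3) I would condition on $\mu_0$ and use that $Z^{(M+1)}_0|\mu_0$ is produced by the very marking mechanism defining $\mathrm{IBP}(G_{A_0}|\mu_0)$: splitting $\mu_0$ into its $\mathrm{CRM}(\rho_{q,M}B_0)$ part and its fixed atoms at $\omega_1,\dots,\omega_K$ with masses $\mathbf{J}_{q,\ell}$, the continuous part contributes, by definition, an $\mathrm{IBP}(A_0,\rho_{q,M}B_0)$ vector $\tilde Z^{(M+1)}_0$ determined by $N_M$, and at atom $\ell$ the mark $A^{(\ell)}_0$ is drawn from $G_{A_0}(\cdot|\mathbf{J}_{q,\ell})$; averaging over the law of $\mathbf{J}_{q,\ell}$ from part (1) gives the componentwise form $\tilde Z^{(M+1)}_j+\sum_{\ell=1}^K A^{(\ell)}_j\delta_{\omega_\ell}$.

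The main obstacle — really the only thing beyond transcribing the univariate argument — is verifying that Propositions 2.1 and 2.2 of James~\cite{James2005} apply to the multivariate $\mathrm{PRM}$ $N$ on $(\mathbb{R}^q_+\setminus\{\mathbf 0\})\times\Omega$. One must check the Barndorff-Nielsen--Pedersen--Sato integrability condition $\int\min(s_\cdot,1)\rho_q(\mathbf{s}_q|\omega)\,d\mathbf{s}_q\,B_0(d\omega)<\infty$ so that $N$ and $\mu_0$ are well defined, that $\Psi(f_M)<\infty$ (which follows from $1-[1-\pi_{A_0}(\mathbf{s}_q)]^M\le M\pi_{A_0}(\mathbf{s}_q)$ together with the behaviour of $\pi_{A_0}$ near $\mathbf{0}$ forced by the mass-at-zero hypothesis, exactly as in the univariate models), and that the moment-measure identity still depends only on the $K$ unique vector atoms and not on their multiplicities. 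This last point is precisely James~\citep[Proposition 5.2]{James2005}, already stated for general Polish carrier spaces, so no genuinely new estimate is needed; the remainder is the finite-dimensional Bayes-rule bookkeeping carried out above.
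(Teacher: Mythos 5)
Your proposal is correct and follows exactly the route the paper intends: the paper gives no separate proof of this proposition, instead deriving the joint law (\ref{mainjoint3multi2}) by the same exponential change of measure (\ref{prop1}) and moment-measure disintegration (\ref{prop2}) applied to the multivariate likelihood, and then reading off parts (1)--(3) precisely as you do. Your closing remarks on the Barndorff-Nielsen--Pedersen--Sato integrability condition and the multiplicity-independence of the moment measure are the right technical checks and are consistent with the references the paper itself invokes.
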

It remains to sample $\tilde{Z}^{(M+1)}_{0},$ we close with a generalization of Proposition 2.2.

\begin{prop}\label{multisample}Let $Z_{0}=(Z_{1},\ldots,Z_{\upsilon})$ have a  $\mathrm{IBP}(A_{0},\rho_{q}B_{0})$ distribution,  where it is assumed that $\rho_{q}$ is homogeneous. If $$\varphi:=\varphi(\rho_{q}|\pi_{A_{0}})=\int_{\mathbb{R}^{q}_{+}}\pi_{A_{0}}(\mathbf{s}_{q})
\rho_{q}(d\mathbf{s}_{q})<\infty,$$ then for $j=1,\ldots \upsilon$
$$
Z_{j}\overset{d}=\sum_{k=1}^{\xi(\varphi)}X_{j,k}\delta_{\tilde{\omega}_{k}}
$$
where $X_{0,k}=(X_{1,k},\ldots X_{\upsilon,k})$ are iid across $k,$ and independent of $\xi(\varphi)$ a Poisson random variable with mean $\varphi.$ Furthermore setting 
$H_{0,k}=(H_{1,k},\ldots H_{q,k}),$ there are the iid pairs $((H_{0,k},X_{0,k}))$ 
with distributions
$$
\mathbb{P}(X_{0,k}\in d\mathbf{a}_{0}|H_{0,k}=\mathbf{s}_{q})=
{\left[\frac{\indic{\{\mathbf{a}_{0}\notin {\mathbf{0}}\}}G_{A_{0}}(d\mathbf{a}_{0}|\mathbf{s}_{q})}{
\pi_{A_{0}}(\mathbf{s}_{q})}\right]}
$$
and $\mathbb{P}(H_{0,k}\in d\mathbf{s}_{q})=\pi_{A_{0}}(\mathbf{s}_{q})\rho_{q}(d\mathbf{s}_{q})/\varphi(\rho_{q}|\pi_{A_{0}}).$
\end{prop}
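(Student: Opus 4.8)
The plan is to recognize Proposition~\ref{multisample} as the exact vector analogue of Proposition~\ref{Hdecomp} and to run the same argument, reading off the coordinates at the end. Recall from the construction preceding the statement that $Z_0=(Z_1,\ldots,Z_\upsilon)$ with $Z_j=\sum_k A_{j,k}\delta_{\tilde\omega_k}$ is built from the points $((A_{0,k},\mathbf{s}_{q,k},\tilde\omega_k))$ of a marked Poisson random measure on $\mathcal{A}\times\mathbb{R}^q_+\times\Omega$ whose mean intensity, under the standing homogeneity assumption, is $G_{A_0}(d\mathbf{a}_0|\mathbf{s}_q)\rho_q(d\mathbf{s}_q)B_0(d\omega)$ with $B_0$ a probability measure. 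First I would note that every coordinate $Z_j$, and hence $Z_0$ itself, depends only on those atoms with $\mathbf{a}_0\notin{\mathbf{0}}$, since an atom with $A_{0,k}={\mathbf{0}}$ adds the zero vector to $Z_0$.

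Next I would invoke the restriction property of Poisson random measures: the sub-collection of atoms with $\mathbf{a}_0\notin{\mathbf{0}}$ is again a Poisson random measure, now with intensity $\indic_{\{\mathbf{a}_0\notin{\mathbf{0}}\}}G_{A_0}(d\mathbf{a}_0|\mathbf{s}_q)\rho_q(d\mathbf{s}_q)B_0(d\omega)$. Integrating out $\mathbf{a}_0$, $\mathbf{s}_q$ and $\omega$, and using $\int_{\mathbf{a}_0\notin{\mathbf{0}}}G_{A_0}(d\mathbf{a}_0|\mathbf{s}_q)=\pi_{A_0}(\mathbf{s}_q)$ together with $B_0(\Omega)=1$, the total mass of this restricted intensity is precisely $\int_{\mathbb{R}^q_+}\pi_{A_0}(\mathbf{s}_q)\rho_q(d\mathbf{s}_q)=\varphi$, finite by hypothesis. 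Hence the restricted Poisson random measure is a finite point process, and by the standard representation of a finite Poisson random measure it equals in distribution $\sum_{k=1}^{\xi(\varphi)}\delta_{X_{0,k},H_{0,k},\tilde\omega_k}$, where $\xi(\varphi)\sim\mathrm{Poisson}(\varphi)$ and, independently of $\xi(\varphi)$, the triples $((X_{0,k},H_{0,k},\tilde\omega_k))$ are iid from the normalized intensity $\varphi^{-1}\indic_{\{\mathbf{a}_0\notin{\mathbf{0}}\}}G_{A_0}(d\mathbf{a}_0|\mathbf{s}_q)\rho_q(d\mathbf{s}_q)B_0(d\omega)$.

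It then remains only to read off the stated laws by disintegration. The normalized intensity factors as $B_0(d\omega)$ (independent $\tilde\omega_k$) times a joint law for $(X_{0,k},H_{0,k})$ proportional to $\indic_{\{\mathbf{a}_0\notin{\mathbf{0}}\}}G_{A_0}(d\mathbf{a}_0|\mathbf{s}_q)\rho_q(d\mathbf{s}_q)$; integrating out $\mathbf{a}_0$ gives the $H_{0,k}$-marginal $\pi_{A_0}(\mathbf{s}_q)\rho_q(d\mathbf{s}_q)/\varphi$, and dividing the joint law by this marginal leaves the conditional $\mathbb{P}(X_{0,k}\in d\mathbf{a}_0|H_{0,k}=\mathbf{s}_q)=\indic_{\{\mathbf{a}_0\notin{\mathbf{0}}\}}G_{A_0}(d\mathbf{a}_0|\mathbf{s}_q)/\pi_{A_0}(\mathbf{s}_q)$. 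Projecting onto the $j$-th coordinate of $X_{0,k}$ yields $Z_j\overset{d}=\sum_{k=1}^{\xi(\varphi)}X_{j,k}\delta_{\tilde\omega_k}$, as claimed; this is the same mark-space disintegration already used for Proposition~\ref{Hdecomp}, and indeed a direct application of the identities~(\ref{prop1})--(\ref{prop2}) with $f\equiv 0$.

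I expect the only delicate point to be the passage from the ambient measure — which may be infinite-activity when $\int\rho_q(d\mathbf{s}_q)=\infty$, so that there are infinitely many atoms with small $\mathbf{s}_q$ — to the restricted one: one must be certain that $\{\mathbf{a}_0\notin{\mathbf{0}}\}$ carries only finite intensity mass, so that the restricted point process genuinely has finitely many atoms almost surely. This is exactly what the hypothesis $\varphi<\infty$ provides, and with it everything else is routine.
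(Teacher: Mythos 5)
Your proposal is correct and is essentially the argument the paper gives: the paper's (very terse) proof writes the joint Laplace functional of $\sum_{j}t_{j}Z_{j}(f)$ via marked Poisson process theory and observes that $\varphi<\infty$ ``reduces the result to manipulation of the joint measure'' $\indic_{\{\mathbf{a}_{0}\notin\mathbf{0}\}}G_{A_{0}}(d\mathbf{a}_{0}|\mathbf{s}_{q})\rho_{q}(d\mathbf{s}_{q})$, and your restriction-to-$\{\mathbf{a}_{0}\notin\mathbf{0}\}$ followed by normalization into a compound-Poisson representation is exactly that manipulation spelled out. The only cosmetic slip is the closing aside that this is ``(\ref{prop1})--(\ref{prop2}) with $f\equiv 0$'' --- the relevant tools are the marking and restriction theorems, not those identities --- but nothing in your argument depends on that remark.
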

\begin{proof}It follows that the distribution of $Z_{0}$ is characterized by the Laplace or characteristic functional of $\sum_{j=1}^{\upsilon}t_{j}Z_{j}(f),$ for general $t_{i}.$ Appealing again to the theory of Poisson marked processes one sees the L\'evy exponent is given by
$$
\int_{\Omega}\int_{\mathcal{A}}\int_{\mathbb{R}^{q}_{+}}(1-{\mbox e}^{-\sum_{j=1}^{\upsilon}t_{j}a_{j}f(\omega)})\indic{\{\mathbf{a}_{0}\notin {\mathbf{0}}\}}G_{A_{0}}(d\mathbf{a}_{0}|\mathbf{s}_{q})\rho_{q}(d\mathbf{s}_{q})B_{0}(d\omega).
$$
The condition $\varphi<\infty$ reduces the result to manipulation of the joint measure 
$$\indic{\{\mathbf{a}_{0}\notin {\mathbf{0}}\}}G_{A_{0}}(d\mathbf{a}_{0}|\mathbf{s}_{q})\rho_{q}(d\mathbf{s}_{q}).$$
\end{proof}
\begin{rem}Naturally for each $A_{0},$ and $M$ one uses Proposition~\ref{multisample} with 
$$
\rho_{q,M}(\mathbf{s}_{q})={[1-\pi_{A_{0}}(\mathbf{s}_{q})]}^{M}\rho_{q}(\mathbf{s}_{q})
$$
in order to sample  $\tilde{Z}^{(M+1)}_{0}.$
\end{rem}

\subsection{The multinomial case: Indian buffet process with a condiment}The simplest multivariate model is of course the multinomial case that was outlined in section ~\ref{Multi}.  Each non-zero entry in the matrix contains a vector of length $q$ where one entry takes the value $1$ and the other entries in the vector take the value $0.$ This could be described in terms of a customer selecting a certain dish but along with that choosing one particular condiment to go along with that dish. Perhaps a mango chutney or simply salt.  This means that 2 or more individuals may have selected the same dish (have the same basic trait) but might differ in terms of the accompanying condiment. Since $Z^{(i)}_{\cdot}=\sum_{j=1}^{q}Z^{(i)}_{j}$ is a Bernoulli process it is evident that customers choose basic dishes according to the Indian buffet process arising in the univariate case. In addition, given each new dish chosen, customer $M+1,$
selects with it the $j$-th of $q$ possible condiments with probability
$$
\int_{R^{q}_{+}}{p_{j}}\rho_{q,M}(dp_{1},\ldots,dp_{q})
$$
Customer $M+1$ also will possibly choose a previously selected dish $\omega_{\ell},$ with probability $\sum_{j=1}^{q}r_{j,\ell}$ and given this will choose one of q condiments, say j, (possibly different than what has previously been chosen by others) with probability $u_{j,\ell}.$ That is to say according to a Multinomial$(1,(r_{1,\ell},\ldots,r_{q,\ell}))$ distribution described in Proposition~\ref{multiprop}. 
The customer will do this for each of the $\ell=1,\ldots K$ previously selected dishes. 
\subsection{Multinomial Case: Stable-Beta-Dirichlet process priors}\label{bD}
One of the tasks in the mulivariate case is to find convenient priors for $\mu_{0}.$ In the simple multinomial case we now show how the class of Beta-Dirichlet priors introduced in Kim, James and Weissbach \cite{KimY2} leads to explicit results. We introduce a slight modification of this model which allows for power-law behavior in the sense of Teh and G\"orur\cite{TehG},  specify $\mu_{0}$ to be a stable-Beta-Dirichlet process with parameters $(\alpha,\beta+\alpha; \gamma_{1},\ldots,\gamma_{q};\theta)$ by setting
$$
\rho_{q}(p_{1},\ldots,p_{q})=\frac{\theta\Gamma(\sum_{j=1}^{q}\gamma_{j})}{\prod_{j=1}^{q}\Gamma(\gamma_{j})}
p^{-\alpha-\sum_{j=1}^{q}\gamma_{j}}_{\cdot}{(1-p_{\cdot})}^{\beta+\alpha-1}\prod_{j=1}^{q}p^{\gamma_{j}-1}_{j}\indic_{\{0<p_{\cdot}< 1\}},
$$ 
for $0\leq \alpha<1,$ $\beta>-\alpha$, $\theta>0$ and $\gamma_{j}>0$ for $j=1,\ldots q.$ When $\alpha=0,$ this is the Beta-Dirichlet process given in~\cite{KimY2}.
Making the change of variable $s=p_{\cdot},$ it is easy to check that $\mu_{\cdot}=\sum_{j=1}^{q}\mu_{j}$ is a stable-Beta process in the sense of \cite{TehG} with L\'evy density 
$\rho(s)=\theta s^{-\alpha-1}(1-s)^{\beta+\alpha-1}.$ It follows that 
for each $\ell,$ $\mathbf{J}_{q,\ell}=(J_{1,\ell},\ldots,J_{q,\ell})$ is such that $\sum_{j=1}^{q}J_{j,\ell}$ has a $\mathrm{Beta}(c_{\ell,M}-\alpha,M+\beta+\alpha-c_{\ell,M})$ distribution just as the univariate case. Furthermore $\mathrm{D}_{q,\ell}:=(D_{1,\ell},\ldots,D_{q,\ell}),$ where $D_{j,\ell}=J_{j,\ell}/\sum_{k=1}^{q}J_{k,\ell}$ is independent of $\sum_{k=1}^{q}J_{k,\ell}$ and is a $\mathrm{Dirichlet}(c_{1,\ell,M}+\gamma_{1},\ldots,c_{q,\ell,M}+\gamma_{q})$ vector. Note that $\mu_{0,M}$ is a stable-Beta-Dirichlet process with parameters $(\alpha,M+\beta+\alpha; \gamma_{1},\ldots,\gamma_{q};\theta).$

Hence customer $M+1$ chooses an existing dish $\omega_{\ell}$ and accompanying condiment $j$ with probability
$$
r_{j,\ell}=\frac{c_{j,\ell,M}+\gamma_{j}}{c_{\ell,M}+\sum_{k=1}^{q}\gamma_{k}}
\times \frac{c_{\ell,M-\alpha}}{M+\beta}.
$$
For $\tilde{Z}^{(M+1)}_{0}$ it follows that for $j=1,\ldots,q,$ 
$$
\tilde{Z}^{(M+1)}_{j}\overset{d}=\sum_{k=1}^{\xi(\varphi)}X_{j,k}\delta_{\tilde{\omega}_{k}}
$$
where
$$
\varphi=\theta\int_{0}^{1}s^{1-\alpha-1}(1-s)^{M+\beta+\alpha-1}ds=\frac{\theta\Gamma(1-\alpha)\Gamma(M+\beta+\alpha)}{\Gamma(M+\beta+1)}
$$
and for each $k,$ $X_{0,k}=(X_{1,k},\ldots, X_{q,k})$ is a simple multinomial with probability mass function
\begin{equation}
\indic_{\{\sum_{i=1}^{q}x_{i}=1\}}\prod_{j=1}^{q}{\left(\frac{\gamma_{j}}{\sum_{i=1}^{q}\gamma_{i}}\right)}^{x_{j}}.
\label{DirM}
\end{equation}
Thus customer $M+1$ chooses a Poisson $\varphi$ number of new dishes exactly as in the univariate case and also for each new dish chosen selects a single condiment $j$ with probability $\gamma_{j}/\sum_{i=1}^{q}\gamma_{i},$ for $j=1,\ldots,q.$
\begin{rem}Note that for each $(M,k),$ $H_{0,k}$ is marginally a Beta-Dirichlet random vector with parameters $(1-\alpha,M+\beta+\alpha; \gamma_{1},\ldots,\gamma_{q}).$ This means that $H_{\cdot,k}:=\sum_{j=1}^{q}H_{j,k}$ is a $\mathrm{Beta}(1-\alpha,M+\beta+\alpha)$ random variable and the vector $(D_{1,k},\ldots, D_{q,k}),$ for $D_{j,k}=H_{j,k}/H_{\cdot,k},$ is $\mathrm{Dirichlet}(\gamma_{1},\ldots,\gamma_{q}),$ which leads to (\ref{DirM}).
\end{rem}
\section{Modelling capabilities and challenges}
As discussed in~James~\cite{James2002, James2005} the PPC is a direct extension of a disintegration/Fubini calculus employed for gamma and Dirichlet processes by Albert Lo, which Lo further credits as techniques developed from conversations with Lucien Le Cam. The idea to extend this to a general Poisson random measure setting was suggested to this author by Jim Pitman in 2001.
The PPC seeks out the fundamental prior-posterior disintegration of the Bayesian joint distribution, and as demonstrated is designed to exploit the common features of random processes whose behavior is governed by a Poisson random measure. In other words it is a Palm calculus tailor-made for data structures arising in non-parametric Bayesian settings. This naturally includes any process based on completely random measures. This provides a unified approach for posterior analysis and also allows one to pinpoint differences between various processes. More general descriptions of posterior analysis are given in \cite{James2002,James2005}. Within our particular context, this analysis paves the way for the treatment of these infinite dimensional processes in a similar fashion to the now well understood Dirichlet process in complex applications. 

The remaining structure of the IBP and its generalizations mentioned here, as already evidenced by previous works, presents exciting opportunities for it seems an abundance of varied applications. We note the generalization to the multivariate setting accommodates practically any distribution. For example, it would be of interest to explore further the usage/interpretation of \emph{multivariate Bernoulli} distributions~\cite{Dai, Whitaker} within the IBP context. This class of distributions has been recently discussed in \cite{Dai} in relation to the the treatment of undirected graphical models with binary nodes as described in~\cite{Wainwright}. For illustration the bivariate Bernoulli distribution is defined in terms of random variables $A_{0}:=(A_{1},A_{2})$ taking values in the space ${\{0,1\}}^{2},$ with joint probability mass function, depending on probabilities $\mathbf{p}_{3}:=(p_{0,1},p_{1,0},p_{1,1}),$ with $1-\pi_{A_0}(\mathbf{p}
_{3}):=p_{0,0}=1-(p_{0,1}+p_{1,0}+p_{1,1}),$
$$
\mathbb{P}(A_{1}=a_{1},A_{2}=a_{2}|\mathbf{p}_{3})=p^{a_{1}a_{2}}_{1,1}
p^{a_{1}(1-a_{2})}_{1,0}p^{a_{2}(1-a_{1})}_{0,1}p^{(1-a_{1})(1-a_{2})}_{0,0}.
$$
This model fits into the framework of section 5, so it remains to obtain plausible/interesting interpretations and perhaps questions in regards to the choice of $\rho,$ for practical implementation. For general ideas in regards to construction of conjugate models see the work of~\cite{Orbanz2,Orbanz3} and \cite{Broderick4}.

\end{document}